\newcommand{\tbt}[4]{\begin{pmatrix}#1 & #2 \\ #3 & #4\end{pmatrix}}
\newcommand{\stbt}[4]{\left(\begin{smallmatrix}#1 & #2 \\ #3 & #4\end{smallmatrix}\right)}
\title[Adelic Galois representations]{Images of adelic Galois representations for modular forms}
\author{David Loeffler}
\address{Mathematics Institute\\
Zeeman Building, University of Warwick\\
Coventry CV4 7AL, UK}
\email{d.a.loeffler@warwick.ac.uk}
\thanks{The author's research was supported by the Royal Society University Research Fellowship ``$L$-functions and Iwasawa theory''. This work was prepared during a visit to the Mathematical Sciences Research Institute in Berkeley, California, supported by NSF Grant No. 0932078000.}
\theoremstyle{plain}
    \newtheorem{theorem}{Theorem}[subsection]
    \newtheorem{lemma}[theorem]{Lemma}
    \newtheorem{proposition}[theorem]{Proposition}
    \newtheorem{corollary}[theorem]{Corollary}
\theoremstyle{definition}
    \newtheorem{definition}[theorem]{Definition}
\theoremstyle{remark}
    \newtheorem{remark}[theorem]{Remark}
\newtheorem*{hypothesis}{Hypothesis}
 \DeclareMathOperator{\GL}{GL}
 \DeclareMathOperator{\PGL}{PGL}
 \DeclareMathOperator{\SL}{SL}
 \DeclareMathOperator{\PSL}{PSL}
 \DeclareMathOperator{\norm}{norm}
 \DeclareMathOperator{\Gal}{Gal}
 \DeclareMathOperator{\Ind}{Ind}
 \DeclareMathOperator{\Hyp}{Hyp}
 \newcommand{\cO}{\mathcal{O}}
 \newcommand{\frp}{\mathfrak{p}}
 \newcommand{\CC}{\mathbf{C}}
 \newcommand{\QQ}{\mathbf{Q}}
 \newcommand{\ZZ}{\mathbf{Z}}
 \newcommand{\FF}{\mathbf{F}}
 \newcommand{\Qp}{{\QQ_p}}
 \newcommand{\Zp}{{\ZZ_p}}
 \newcommand{\into}{\hookrightarrow}
\numberwithin{equation}{subsection}
\begin{document}

 \begin{abstract}
  We show that the image of the adelic Galois representation attached to a non-CM modular form is open in the adelic points of a suitable algebraic subgroup of $\GL_2$ (defined by F.~Momose). We also show a similar result for the adelic Galois representation attached to a finite set of modular forms.
 \end{abstract}

 \maketitle

 \section*{Introduction}
 
  Let $E$ be an elliptic curve over $\QQ$, and $p$ a prime number. Then the action of the Galois group on the Tate module of $E$ determines a Galois representation
  \[ 
   \rho_{E, p}: \operatorname{Gal}(\overline{\QQ} / \QQ) \to \GL_2(\ZZ_p). 
  \]
  If $\hat\ZZ = \prod_{p} \ZZ_p$ is the profinite completion of $\ZZ$ (the integer ring of the ring $\hat\QQ$ of finite adeles), then the product of the $\rho_{E, p}$ defines an adelic Galois representation
  \[ \rho_E:  \operatorname{Gal}(\overline{\QQ} / \QQ) \to \GL_2(\hat\ZZ).\]
  
  Suppose $E$ does not have complex multiplication. Then the images of these representations are described by the following three theorems, all of which are due to Serre:
  \begin{enumerate}
   \item[(A)] \cite[\S IV.2.2]{serre68b} For all primes $p$, the image of $\rho_{E, p}$ is open in $\GL_2(\Zp)$.
   \item[(B)] \cite[Theorem 2]{serre72} For all but finitely many $p$, the image of $\rho_{E, p}$ is the whole of $\GL_2(\Zp)$.
   \item[(C)] \cite[Theorem 3]{serre72} The image of the product representation $\rho_E$ is open in $\GL_2(\hat\ZZ)$.
  \end{enumerate}
  
  Note that (C) implies both (A) and (B), but the converse is not automatic; theorem (C) shows that not only do the $\rho_{E, p}$ individually have large image, but they are in some sense ``independent of each other'' up to a finite error.
  
  If one replaces the elliptic curve $E$ by a modular eigenform $f$, then one has $p$-adic Galois representations $\rho_{f, p}$ and an adelic representation $\rho_f$, and it is natural to ask whether analogues of theorems (A)--(C) hold in this context. For modular forms of level 1, analogues of all three theorems were obtained by Ribet \cite{ribet75}; but the case of modular forms of higher level is considerably more involved, owing to the presence of so-called ``inner twists''. 
  
  The appropriate analogues of (A) and (B) for general eigenforms were determined by Momose \cite{momose81} and Ribet \cite{ribet85} respectively. However, somewhat surprisingly, there does not seem to be a result analogous to (C) in the literature for general modular eigenforms $f$. The first aim of this paper is to fill this minor gap, by formulating and proving an analogue of (C) for general eigenforms; see \S \ref{sect:largeimage}, in particular Theorem \ref{thm:bigimageGL2}.
  
  The second aim of this paper is to extend these results to pairs of modular forms: given two modular forms $f, g$, how large is the image of the product representation $\rho_f \times \rho_g$? In \S \ref{sect:jointlargeimage} we formulate and prove analogues of (A)--(C) for this product representation. This extends earlier partial results due to Ribet \cite[\S 6]{ribet75} (who proves the analogue of (B) for pairs of modular forms of level 1, and sketches an analogue of (A)); and of Lei, Zerbes and the author \cite[\S 7.2.2]{LLZ14} (who prove an analogue of (B) for pairs of modular forms of weight 2). 
  
  These results can all be extended to the case of arbitrary finite collections $(f_1, \dots, f_n)$ of modular forms, but we give the proofs only in the case $n = 2$ to save notation.

  In section \ref{sect:specialelt}, we give an application of these results which was the original motivation for our study of images of Galois representations; this is to exhibit certain special elements in the images of the tensor product Galois representations $\rho_{f, p} \otimes \rho_{g, p}$ whose existence is important in Euler system theory. These results are used in \cite{KLZ15b} in order to prove finiteness results for Selmer groups.

 \subsection*{Acknowledgements}

  This paper was written while the author was a guest at the Mathematical Sciences Research Institute in Berkeley, California, for the Fall 2014 programme ``New Geometric Methods in Number Theory and Automorphic Forms''. The author would like to thank MSRI, for providing such a fantastic working environment; and the many other MSRI guests with whom he had illuminating conversations on this topic, notably Luis Dieulefait, Eknath Ghate, Kiran Kedlaya, Ken Ribet, and Sarah Zerbes. The original impetus for writing this paper came from an exchange on the website MathOverflow; the author would like to thank Jeremy Rouse for his valuable suggestions during this discussion.

 \section{Some profinite group theory}

  \subsection{Preliminary lemmas}

   \begin{lemma}[Ribet]
    \label{lemma:lifting}
    Let $p \ge 5$ be prime, and let $K_1, \dots, K_t$ be finite unramified extensions of $\Qp$, with rings of integers $\cO_1, \dots, \cO_t$ and residue fields $k_1, \dots, k_t$. Let $G$ be a closed subgroup of $\SL_2(\cO_1) \times \dots \times \SL_2(\cO_t)$ which surjects onto $\PSL_2(k_1) \times \dots \times \PSL_2(k_t)$. Then $G = \SL_2(\cO_1) \times \dots \times \SL_2(\cO_t)$.
   \end{lemma}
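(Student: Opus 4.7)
\emph{Proof plan.} My plan is to prove the single-factor case $t = 1$ directly (by analysing the congruence filtration of $\SL_2(\cO)$) and then reduce the general case to it via Goursat's lemma. Once $t = 1$ is known, the projection of $G$ to each individual $\SL_2(\cO_i)$ is a closed subgroup surjecting onto $\PSL_2(k_i)$, hence is all of $\SL_2(\cO_i)$; Goursat then describes $G$ inside $\prod_i \SL_2(\cO_i)$ as a fibre product over compatible common quotients of the factors. Each such nontrivial common quotient would correspond to proper closed normal subgroups $N_i \trianglelefteq \SL_2(\cO_i)$ with $\SL_2(\cO_i)/N_i \cong \SL_2(\cO_j)/N_j$; applying the $t=1$ case to $N_i$ itself forces the image of $N_i$ in $\PSL_2(k_i)$ to be a proper subgroup of $\PSL_2(k_i)$ (else $N_i$ would be all of $\SL_2(\cO_i)$), and by simplicity of $\PSL_2(k_i)$ for $p \ge 5$ this image is trivial. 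The projection of $G$ to $\PSL_2(k_i) \times \PSL_2(k_j)$ would then be contained in the graph of an induced isomorphism $\PSL_2(k_i) \cong \PSL_2(k_j)$, contradicting the assumed product surjectivity onto $\prod_i \PSL_2(k_i)$.

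For the single-factor case $H \le \SL_2(\cO)$ closed with $H \twoheadrightarrow \PSL_2(k)$, I filter by $H^{(n)} := H \cap \Gamma^{(n)}$ where $\Gamma^{(n)} = \ker\bigl(\SL_2(\cO) \to \SL_2(\cO/p^n)\bigr)$. Since $\SL_2(k)$ is perfect for $p \ge 5$ (and so has no subgroup of index $2$), the surjection $H \twoheadrightarrow \PSL_2(k)$ lifts to $H \twoheadrightarrow \SL_2(k)$. Each $H^{(n)}$ is normal in $H$, and for $p \ge 5$ the graded piece $\Gamma^{(n)}/\Gamma^{(n+1)} \cong \mathfrak{sl}_2(k)$ is an absolutely irreducible $\SL_2(k)$-module under the adjoint action, so the image of $H^{(n)}$ in it is either zero or the full module. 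The congruence identity $(1 + p^n A)^p \equiv 1 + p^{n+1} A \pmod{p^{n+2}}$ (valid for $p \ge 3$ and $n \ge 1$) shows that if the image at level $n$ is full then so is the image at level $n+1$; hence the set of ``full'' levels is upward-closed, and $H \cap \Gamma^{(1)} = \Gamma^{(n_0)}$ for some $n_0 \in \{1, 2, \dots, \infty\}$.

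The main obstacle is to rule out $n_0 > 1$, which is where the hypothesis $p \ge 5$ is essential. If $n_0 > 1$ then $H/\Gamma^{(n_0)}$ embeds into $\SL_2(\cO/p^{n_0})$ as an isomorphic copy of $\SL_2(k)$; but $\SL_2(k)$ contains elements of order $p$ (unipotents), while a direct computation with the congruence expansion shows that any $x \in \SL_2(\cO/p^{n_0})$ whose reduction mod $p$ is a non-trivial unipotent satisfies $x^p \not\equiv 1 \pmod{p^{n_0}}$. Equivalently, $\SL_2(\cO)$ contains no element of order $p$ for $p \ge 5$, because the trace $\zeta + \zeta^{-1}$ of a primitive $p$-th root of unity in $\overline{\Qp}$ does not lie in any unramified extension of $\Qp$. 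This contradiction forces $n_0 = 1$, whence $\Gamma^{(1)} \subseteq H$ and therefore $H = \SL_2(\cO)$, completing both the single-factor case and, via the reduction described above, the lemma.
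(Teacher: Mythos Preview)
Your argument is essentially correct and genuinely different from the paper's. The paper proceeds in two cited steps: first it observes that no proper subgroup of $\prod_i \SL_2(k_i)$ can surject onto $\prod_i \PSL_2(k_i)$ (reducing by induction to the $t=1$ case, which is Serre's Lemma~IV.3.4.2), thereby upgrading the hypothesis to surjectivity onto $\prod_i \SL_2(k_i)$; then it invokes Ribet's Theorem~2.1 to lift from $\prod_i \SL_2(k_i)$ to $\prod_i \SL_2(\cO_i)$. You instead establish the $t=1$ case from scratch via the congruence filtration and the non-existence of order-$p$ elements in $\SL_2(\cO)$ (effectively re-deriving both Serre's lemma and the single-factor case of Ribet's theorem), and then handle the product structure by Goursat. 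Your route is self-contained, while the paper's cleanly separates the ``$\PSL_2 \to \SL_2$'' and ``mod $p \to$ integral'' lifting problems and delegates each to the literature.

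Two small points to tighten. First, your $t>2$ reduction is stated loosely: the cleanest completion is to note that your $t=2$ analysis shows $G$ surjects onto every pair $\SL_2(\cO_i)\times\SL_2(\cO_j)$, and then a standard commutator trick (using that each $\SL_2(\cO_i)$ is topologically perfect for $p\ge 5$) forces $G$ to contain each factor. Second, in the $t=2$ Goursat step you should make explicit that any proper closed normal subgroup of $\SL_2(\cO_i)$ lies inside the kernel $K_i$ of reduction to $\PSL_2(k_i)$ (this is exactly your $t=1$ result applied to that subgroup), so $K_i/N_i$ is the \emph{unique} maximal proper closed normal subgroup of $\SL_2(\cO_i)/N_i$; this is what forces the Goursat isomorphism $\phi$ to descend to an isomorphism $\PSL_2(k_1)\cong\PSL_2(k_2)$, giving the contradiction you claim.
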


   \begin{proof}
    If we assume that $G$ surjects onto $\SL_2(k_1) \times \dots \times \SL_2(k_t)$ this is a special case of Theorem 2.1 of \cite{ribet75}. So it suffices to check that there is no proper subgroup of $\SL_2(k_1) \times \dots \times \SL_2(k_t)$ surjecting onto $\PSL_2(k_1) \times \dots \times \PSL_2(k_t)$. But this follows readily by induction from the case $t = 1$, which is Lemma IV.3.4.2 of \cite{serre68b}.
   \end{proof}

   \begin{lemma}[{cf.~\cite[Lemma IV.3.4.1]{serre68b}}]
    \label{lemma:groupsoccurring}
    Let $K$ be a finite extension of $\Qp$ for some prime $p$, and let $Y_1, Y_2$ be closed subgroups of $\GL_2(\cO_K)$ such that $Y_1 \triangleleft Y_2$ and $Y_2 / Y_1$ is a nonabelian finite simple group. Then $Y_2 / Y_1$ is isomorphic to one of the following groups:
    \begin{itemize}
     \item $\PSL_2(\FF)$, where $\FF$ is a finite field of characteristic $p$ such that $\# \FF \ge 4$;
     \item the alternating group $A_5$.
    \end{itemize}
   \end{lemma}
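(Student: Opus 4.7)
The plan is to reduce the problem from the local ring $\cO_K$ to the residue field $k$, and then to invoke Dickson's classification of finite subgroups of $\PGL_2$ in characteristic $p$.

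First, let $\frp$ be the maximal ideal of $\cO_K$, let $k = \cO_K/\frp$ be the residue field (of characteristic $p$), and write $U_1 = \ker\bigl(\GL_2(\cO_K) \to \GL_2(k)\bigr)$ for the first principal congruence subgroup. Since $U_1$ is pro-$p$ and normal in $\GL_2(\cO_K)$, the subgroup $H := Y_2 \cap U_1$ is pro-$p$ and normal in $Y_2$. Its image $HY_1/Y_1$ is then a normal subgroup of the simple group $Y_2/Y_1$, hence either trivial or the whole group. The second alternative would exhibit $Y_2/Y_1$ as a continuous quotient of a pro-$p$ group, hence itself a $p$-group, contradicting the hypothesis that it is nonabelian and simple. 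So $H \subseteq Y_1$, and $Y_2/Y_1$ appears as a subquotient of $\GL_2(\cO_K)/U_1 \cong \GL_2(k)$.

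Next I would push this further down to $\PGL_2(k)$. The kernel of $\GL_2(k) \to \PGL_2(k)$ is the abelian subgroup of scalar matrices, and a nonabelian simple group admits no nontrivial normal abelian subgroup, so the same ``normal subgroup of a simple group'' reasoning shows that $Y_2/Y_1$ is a nonabelian simple subquotient of $\PGL_2(k) \subseteq \PGL_2(\overline{k})$ for $\overline{k}$ an algebraic closure of $k$.

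The main step is then to invoke Dickson's classification: every finite subgroup of $\PGL_2(\overline{k})$ is, up to isomorphism, one of
cyclic, dihedral, one of $A_4$, $S_4$, $A_5$, a semidirect product $E \rtimes C$ of an elementary abelian $p$-group by a cyclic group of order prime to $p$, or $\PSL_2(\FF)$ or $\PGL_2(\FF)$ for some finite subfield $\FF \subset \overline{k}$. Running through this list, the solvable entries (cyclic, dihedral, $A_4$, $S_4$, and the semidirect products) have no nonabelian simple quotient; the only nonabelian simple quotient of $A_5$ is $A_5$ itself; and the only nonabelian simple quotient of $\PGL_2(\FF)$ or of $\PSL_2(\FF)$ is $\PSL_2(\FF)$, which is simple exactly when $\#\FF \ge 4$. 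The two possibilities listed in the lemma therefore exhaust the cases.

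The expected obstacle is not the local-to-residue reduction, which is routine profinite group theory using only the pro-$p$ structure of $U_1$ and the abelianness of the center, but rather the classification in the final step. This is classical, and exactly the content of \cite[Lemma IV.3.4.1]{serre68b}, which I would cite directly rather than reproving. Note that, in contrast to Lemma \ref{lemma:lifting}, no restriction on $p$ (such as $p \ge 5$) is needed here.
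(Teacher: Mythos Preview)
Your proof is correct and follows essentially the same approach as the paper's: reduce to a subquotient of $\PGL_2(k)$ (the paper compresses your two reduction steps into the single observation that the kernel of $\GL_2(\cO_K) \to \PGL_2(k)$ is prosolvable, and goes one step further to $\PSL_2(k)$), then invoke Dickson's classification of subgroups. The paper cites Suzuki \cite[Theorem 6.25]{suzuki} for the classification rather than Serre, but the content is the same.
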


   \begin{proof}
    Since the kernel of $\GL_2(\cO_K) \to \PGL_2(k)$ is solvable, where $k$ is the residue field of $K$, we see that any such quotient $Y_2 / Y_1$ is in fact a subquotient of $\PSL_2(k)$. The result now follows from the determination of the subgroup structure of $\PSL_2(k)$, which is due to Dickson; cf.~\cite[Theorem 6.25]{suzuki}.
   \end{proof}

   \begin{lemma}
    If $k$ and $k'$ are any two finite fields of characteristic $\ge 5$ and $\phi: \PSL_2(k) \cong \PSL_2(k')$ is a group isomorphism, then $\phi$ is conjugate in $\PGL_2(k')$ to an isomorphism induced by a field isomorphism $k \cong k'$.
   \end{lemma}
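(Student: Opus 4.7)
The plan is to reduce the statement to the classical description of the automorphism group of $\PSL_2(k)$ for finite fields $k$. As a first step, compare orders: $|\PSL_2(\FF_q)| = \tfrac12 q(q^2-1)$ is strictly increasing in the odd prime power $q$, so $|\PSL_2(k)| = |\PSL_2(k')|$ forces $|k| = |k'|$, and together with the hypothesis that both characteristics are at least $5$ this forces $\operatorname{char}(k) = \operatorname{char}(k')$; hence $k$ and $k'$ are abstractly isomorphic. Fix any field isomorphism $\sigma_0 \colon k \to k'$; it induces group isomorphisms $\PSL_2(k) \to \PSL_2(k')$ and $\PGL_2(k) \to \PGL_2(k')$, both of which I will denote $\tilde\sigma_0$. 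Then $\psi := \tilde\sigma_0^{-1} \circ \phi$ is an automorphism of $\PSL_2(k)$, and it suffices to show that $\psi$ has the desired form as an element of $\operatorname{Aut}(\PSL_2(k))$.

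Now invoke the classical theorem (due to Schreier--van der Waerden; see for instance Suzuki's \emph{Group Theory}, which is already cited for related results in this paper) that for any finite field $k$ with $|k| \ge 4$,
\[ \operatorname{Aut}(\PSL_2(k)) = \PGL_2(k) \rtimes \operatorname{Aut}(k), \]
where $\PGL_2(k)$ acts by conjugation and $\operatorname{Aut}(k)$ acts entry-wise. Applied to $\psi$, this yields a decomposition $\psi = c_g \circ \tilde\tau$ for some $g \in \PGL_2(k)$ and $\tau \in \operatorname{Aut}(k)$, where $c_g$ denotes conjugation by $g$.

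Finally, unwinding the definitions,
\[ \phi = \tilde\sigma_0 \circ c_g \circ \tilde\tau = c_{\tilde\sigma_0(g)} \circ \widetilde{(\sigma_0 \circ \tau)}, \]
which exhibits $\phi$ as the $\PGL_2(k')$-conjugate of the group isomorphism induced by the field isomorphism $\sigma_0 \circ \tau \colon k \to k'$, as required.

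The only substantive ingredient is the determination of $\operatorname{Aut}(\PSL_2(k))$, a standard but nontrivial classical result that I would quote rather than reprove; everything else is bookkeeping. The hypothesis $\operatorname{char} \ge 5$ is comfortably stronger than the condition $|k|\ge 4$ needed for the Schreier--van der Waerden theorem, so no boundary cases arise.
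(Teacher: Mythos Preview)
Your proof is correct and follows essentially the same approach as the paper: first reduce to $k \cong k'$ (the paper simply asserts non-isomorphism of the groups for distinct fields, whereas you supply the order computation), then invoke the classical determination of $\operatorname{Aut}(\PSL_2(k))$. Your formulation of the key input as $\operatorname{Aut}(\PSL_2(k)) = \PGL_2(k) \rtimes \operatorname{Aut}(k)$ is in fact more precise than the paper's one-line proof, which says ``every group automorphism of $\PSL_2(k)$ is induced by conjugation in $\PGL_2(k)$'' and thereby omits the field-automorphism factor that the lemma statement (and your argument) correctly accounts for.
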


   \begin{proof}
    Since the groups $\PSL_2(k)$ for finite fields $k$ of characteristic $\ge 5$ are nonisomorphic unless $k \cong k'$, it suffices to check that every group automorphism of $\PSL_2(k)$ is induced by conjugation in $\PGL_2(k)$, which is a standard fact.
   \end{proof}

   We also have an ``infinitesimal'' version of this statement, which we will use later in the paper.

   \begin{lemma}
    \label{lemma:liealgs}
    If $K$ and $K'$ are finite extensions of $\Qp$ for some prime $p$, $B$ and $B'$ are central simple algebras of degree 2 over $K$ and $K'$ respectively, and the Lie algebras $\operatorname{sl}_1(B)$ and $\operatorname{sl}_1(B')$ are isomorphic as Lie algebras over $\Qp$, then the isomorphism is induced by a field isomorphism $K \cong K'$ and an isomorphism of central simple algebras $B \cong B'$ over $K$.
   \end{lemma}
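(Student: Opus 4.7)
My plan is to recover the field $K$ and the algebra $B$ from the $\Qp$-Lie algebra $L := \operatorname{sl}_1(B)$, in three steps. First, I would identify $K$ with the centroid
\[
 C(L) = \{ f \in \operatorname{End}_{\Qp}(L) : f[x,y] = [f(x),y] \text{ for all } x,y \in L\}.
\]
Since $B$ is central simple of degree $2$ over $K$, $L$ is absolutely simple as a $K$-Lie algebra (becoming $\operatorname{sl}_2(\overline{K})$ after base change), and a standard Galois-descent argument---or direct application of Schur's lemma after decomposing $L \otimes_{\Qp} \overline{\Qp}$ according to the embeddings $K \hookrightarrow \overline{\Qp}$---identifies $C(L)$ with $K$. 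Any $\Qp$-Lie isomorphism $\phi : L \to L'$ induces an isomorphism of centroids by conjugation, hence a $\Qp$-algebra isomorphism $\iota : K \to K'$ satisfying $\phi(cx) = \iota(c)\phi(x)$ for all $c \in K$, $x \in L$.

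After identifying $K'$ with $K$ via $\iota$, the map $\phi$ becomes a $K$-linear Lie algebra isomorphism $\operatorname{sl}_1(B) \cong \operatorname{sl}_1(B')$. Over the local field $K$ there are exactly two isomorphism classes of central simple algebras of degree $2$: the matrix algebra $M_2(K)$ and the unique quaternion division algebra. These are distinguished at the Lie algebra level by the existence of nonzero nilpotent elements, which occur only in the split case: in the division case every trace-zero element $x$ satisfies $x^2 = -\operatorname{N}(x) \cdot 1$ with $\operatorname{N}(x) \ne 0$. Hence $B \cong B'$ as $K$-algebras.

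To promote $\phi$ to an algebra isomorphism, I would write $B = K \cdot 1 \oplus L$ and extend by $\phi(1) = 1$. For $x, y \in L$ the product in $B$ decomposes as
\[
 xy = \tfrac{1}{2}[x,y] + \tfrac{1}{2}(xy + yx),
\]
and polarising $x^2 = -\operatorname{N}(x) \cdot 1$ shows that $xy + yx$ lies in $K \cdot 1$ and is a fixed nonzero scalar multiple of the Killing form $\kappa(x,y)$. Since $\kappa$ is intrinsic to the Lie algebra structure on $L$, it is preserved by $\phi$; hence both summands of $xy$ transform compatibly under $\phi$, and $\phi$ extends to the desired $\iota$-semilinear algebra isomorphism $B \cong B'$.

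I expect the centroid step to be the main technical point, as it is the only place where the $\Qp$-Lie algebra structure canonically determines the field of scalars; once $K$ has been pinned down, the remaining steps reduce to the standard classification of degree-$2$ central simple algebras over a local field and a short computation with the Killing form.
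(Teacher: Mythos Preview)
Your proof is correct and follows the same strategy as the paper: recover $K$ as the centroid (what the paper describes as the $\Qp$-endomorphisms of $\operatorname{sl}_1(B)$ commuting with the adjoint action), reduce to $K=K'$, and then invoke the local classification of quaternion algebras. The only difference is in the last step, where the paper simply notes that every automorphism of $\operatorname{sl}_1(B)$ is inner (the Dynkin diagram $A_1$ has no nontrivial symmetries), whereas you construct the algebra isomorphism by hand via the decomposition $B = K \cdot 1 \oplus \operatorname{sl}_1(B)$ and the Killing form; your version is more explicit but amounts to the same thing, and your nilpotent-element criterion for distinguishing the split and division cases is a nice concrete replacement for the paper's bare assertion that the two Lie algebras are non-isomorphic.
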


   \begin{proof}
    We may recover $K$ from $\operatorname{sl}_1(B)$ as the algebra of $\Qp$-endomorphisms of $\operatorname{sl}_1(B)$ commuting with the adjoint action of $\operatorname{sl}_1(B)$; thus it suffices to consider the case $K' = K$. There are exactly two central simple algebras of degree 2 over any $p$-adic field (one unramified and one unramified), and their Lie algebras are non-isomorphic; and every automorphism of either of these is inner (since the corresponding Dynkin diagram $A_1$ has no automorphisms).
   \end{proof}

  \subsection{Subgroups of adele groups}

   Let $F$ be a \emph{finite \'etale extension} of $\QQ$; that is, $F$ is a ring of the form $\bigoplus_{i = 1}^t F_i$, where $F_i$ are number fields.

   A \emph{quaternion algebra} over $F$ is defined in the obvious way: it is simply an $F$-algebra $B$ of the form $\bigoplus_{i = 1}^t B_i$, where $B_i$ is a quaternion algebra over $F_i$ (a central simple $F_i$-algebra of degree 2); we allow the case of the split algebra $M_{2 \times 2}(F_i)$. There is a natural norm map
   \[ \norm_{B/F}: B^\times \to F^\times\]
   (which is just the product of the reduced norm maps of the $B_i$ over $F_i$).

   We fix a homomorphism of algebraic groups $k: \mathbf{G}_m \to \operatorname{Res}_{F / \QQ} \mathbf{G}_m$; this just amounts to a choice of integers $(k_1, \dots, k_t)$ such that $k(\lambda) = (\lambda^{k_1}, \dots, \lambda^{k_t})$.

   \begin{definition}
    \label{def:G}
    For $B$, $F$, $k$ as above, we let $G$ and $G^\circ$ be the algebraic groups over $\QQ$ such that for any $\QQ$-algebra $R$ we have
    \[ G(R) = \{ (x, \lambda) \in (B \otimes R)^\times \times R^\times: \norm_{B/F}(x) = \lambda^{1-k} \},\]
    and
    \[ G^\circ(R) = \{ x \in (B \otimes R)^\times: \norm_{B/F}(x) = 1\}.\]
   \end{definition}

   Then $G$ and $G^\circ$ are linear algebraic groups over $\QQ$, and $G^\circ$ is naturally a subgroup of $G$. More generally, we may fix a maximal order $\cO_B$ in $B$ and thus define $G$ and $G^\circ$ as group schemes over $\ZZ$. For all but finitely many primes $p$, we then have
   \[ G(\Zp) = \{ (x, \lambda) \in \GL_2(\cO_F) \times \Zp^\times: \det(x) = \lambda^{1-k} \};\]
   changing the choice of $\cO_B$ does not change $G(\Zp)$ away from finitely many primes $p$.

   \begin{theorem}
    \label{thm:mainthm0}
    Let $U^\circ$ be a compact closed subgroup of $G^\circ(\hat\QQ)$, where $\hat\QQ = \QQ \otimes \hat\ZZ$ is the finite adeles of $\QQ$, such that:
    \begin{itemize}
     \item for every prime $p$, the projection of $U^\circ$ to $G^\circ(\Qp)$ is open in $G^\circ(\Qp)$;
     \item for all but finitely many primes $p$, the projection of $U^\circ$ to $G^\circ(\Qp)$ is $G^\circ(\Zp)$.
    \end{itemize}
    Then $U^\circ$ is open in $G^\circ(\hat\QQ)$.
   \end{theorem}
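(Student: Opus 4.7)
The plan is to exhibit a finite set $S$ of primes and open subgroups $V_p\subseteq G^\circ(\Qp)$ with $V_p=G^\circ(\Zp)$ for $p\notin S$ such that $\prod_p V_p\subseteq U^\circ$; openness of $U^\circ$ then follows.

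As setup, compactness of $U^\circ$ in the restricted product $G^\circ(\hat\QQ)$ lets us assume $U^\circ\subseteq\prod_{p\in S_0}G^\circ(\Qp)\times\prod_{p\notin S_0}G^\circ(\Zp)$ for some finite $S_0$. We enlarge $S_0$ to a finite $S$ containing $\{2,3,5\}$, the primes of ramification of $F$ and $B$, the primes of bad reduction of the integral model of $G^\circ$, and all primes with $\pi_p(U^\circ)\ne G^\circ(\Zp)$. For $p\notin S$ we then have $p\ge 7$ and $G^\circ(\Zp)\cong\prod_{\mathfrak{p}|p}\SL_2(\cO_\mathfrak{p})$ with $\cO_\mathfrak{p}$ the ring of integers of an unramified extension of $\Qp$ having residue field $k_\mathfrak{p}$ of size $\ge 7$; each factor is topologically perfect and has $\PSL_2(k_\mathfrak{p})$ as its only nonabelian simple continuous quotient.

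The central step will be a Goursat-type induction on finite sets $T\subseteq\{\mathrm{primes}\}\setminus S$: we claim $\pi_T(U^\circ)=\prod_{p\in T}G^\circ(\Zp)$. The case $|T|=1$ is the hypothesis. For the induction, write $T=T'\sqcup\{q\}$ and set $A=\prod_{p\in T'}G^\circ(\Zp)$, $B=G^\circ(\Zq)$; then $\pi_T(U^\circ)\subseteq A\times B$ surjects onto each factor, so by Goursat's lemma it suffices to rule out nontrivial common continuous finite quotients. Any such quotient $Q$ is perfect, being a quotient of the topologically perfect $B$. Using the filtration of $\SL_2(\cO_\mathfrak{p})$ by kernels of reduction, the composition factors appearing in continuous finite quotients of $A$ are the nonabelian simples $\PSL_2(k_\mathfrak{p})$ with $\mathfrak{p}|p$, $p\in T'$, together with $\ZZ/p$ ($p\in T'$) and $\ZZ/2$; the analogous list for $B$ has $T'$ replaced by $\{q\}$. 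Since $q\notin T'$ and $|k_\mathfrak{p}|\ge 7$ forces $|\PSL_2(k_\mathfrak{p})|\ge 168>|A_5|$, ruling out the $A_5$ coincidence, the two lists share no nonabelian simple factor and their abelian intersection is $\{\ZZ/2\}$. Thus $Q$ is a finite perfect $2$-group, which forces $Q=1$. Taking inverse limits, $U^\circ$ surjects onto $\prod_{p\notin S}G^\circ(\Zp)$.

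A final application of Goursat to $U^\circ\subseteq \bigl(\prod_{p\in S}\pi_p(U^\circ)\bigr)\times\bigl(\prod_{p\notin S}G^\circ(\Zp)\bigr)$ completes the argument. By Lemma \ref{lemma:groupsoccurring} the nonabelian simple composition factors of continuous finite quotients of the left factor are $\PSL_2(\FF)$ of characteristic in $S$ or $A_5$, while those of the right factor are $\PSL_2(k)$ with $|k|\ge 7$ (of order $\ge 168$, excluding $A_5$, and of characteristics outside $S$); the two lists are disjoint. A common continuous quotient is perfect by topological perfection of the right factor, so a nontrivial common quotient would require a common nonabelian simple composition factor, a contradiction. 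Goursat then yields $U^\circ=\prod_{p\in S}\pi_p(U^\circ)\times\prod_{p\notin S}G^\circ(\Zp)$, an open subgroup of $G^\circ(\hat\QQ)$. The main delicate point is the composition-factor bookkeeping: forcing $\{2,3,5\}\subseteq S$ is essential, keeping $|k_\mathfrak{p}|\ge 7$ at all good primes and thereby neutralizing the $A_5=\PSL_2(\FF_5)$ coincidence that would otherwise break the separation of characteristics.
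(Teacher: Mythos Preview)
Your argument is essentially correct and reaches the goal, but by a different route than the paper, and with one step left unjustified at the end.

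\textbf{Comparison with the paper.} The paper does not run a Goursat induction over finite sets of good primes. Instead, for each $p\notin S$ it shows directly that the subgroup $G^\circ(\Zp)$ (embedded at the $p$-th coordinate) is contained in $U^\circ$: it argues that $U^\circ\cap G^\circ(\Zp)$ already surjects onto $\PSL_2(k_p)$---because otherwise $\PSL_2(k)$ for some $k$ of characteristic $p$ would occur as a simple quotient of the image of $U^\circ$ away from $p$, contradicting Lemma~\ref{lemma:groupsoccurring}---and then invokes Ribet's lifting lemma (Lemma~\ref{lemma:lifting}) to conclude $U^\circ\cap G^\circ(\Zp)=G^\circ(\Zp)$. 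Your approach avoids the lifting lemma entirely, replacing it with composition-factor bookkeeping and an inverse-limit step; the paper's approach is shorter but leans on Lemma~\ref{lemma:lifting}. Your insistence on $5\in S$ is also a genuine refinement: with only $\{2,3\}\subseteq S$ as in the paper, the exceptional isomorphism $A_5\cong\PSL_2(\FF_5)$ is not obviously excluded at $p=5$.

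\textbf{The gap.} In your final paragraph you apply Goursat to $U^\circ\subseteq\bigl(\prod_{p\in S}\pi_p(U^\circ)\bigr)\times\bigl(\prod_{p\notin S}G^\circ(\Zp)\bigr)$, but Goursat requires surjection onto the left factor, and you have only shown surjection onto each $\pi_p(U^\circ)$ individually. What your argument actually yields is $U^\circ=\pi_S(U^\circ)\times\prod_{p\notin S}G^\circ(\Zp)$, and you still owe the statement that $\pi_S(U^\circ)$ is open in $\prod_{p\in S}G^\circ(\Qp)$. Your composition-factor method does not handle this, since $A_5$ may appear as a subquotient at several primes of $S$ simultaneously. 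The fix is exactly the paper's closing observation: each $G^\circ(\Qp)$ has an open pro-$p$ subgroup, so for distinct $p,q\in S$ any common continuous quotient of $\pi_p(U^\circ)$ and $\pi_q(U^\circ)$ is simultaneously virtually pro-$p$ and virtually pro-$q$, hence finite; an induction on $|S|$ then gives openness of $\pi_S(U^\circ)$.
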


   The proof we shall give of this theorem is a relatively straightforward generalization of the case $F = \QQ$, $B = M_{2 \times 2}(\QQ)$, $k=2$, which is the Main Lemma of \cite[\S IV.3.1]{serre68b}.

   \begin{proof}
    Let $S$ be a finite set of primes containing 2, 3, all primes ramified in $F / \QQ$, all primes at which $B$ is ramified, and all primes $p$ such that the projection of $U^\circ$ to $G^\circ(\Qp)$ is not equal to $G^\circ(\Zp)$.

    For a prime $p$, let $k_p = \prod_{w \mid p} k_w$ where the product is over primes $w \mid p$ of $F$, and $k_w$ is the residue field of $F$ at $w$. Then for each $p \notin S$ we have a natural map
    \[ U^\circ \to \PSL_2(k_p)\]
    given by projection to the $p$-component and the natural quotient map. By the definition of $S$, it is surjective. I claim that the restriction of this map to $U^\circ \cap G^\circ(\Zp)$ is also surjective (where we consider $G^\circ(\Zp)$ as a subgroup of $G^\circ(\hat\QQ)$ in the natural way).

    If this is not the case, then the group
    \[ Q = U^\circ / \left(U^\circ \cap G^\circ(\Zp)) \right) \]
    must have a quotient isomorphic to a nontrivial quotient of $\PSL_2(k_p)$, and in particular this group must surject onto the simple group $\PSL_2(k)$ for some finite field $k$ of characteristic $p$. However, the group $Q$ is exactly the image of $U^\circ$ in $\prod_{q \ne p} G^\circ(\QQ_q)$. Hence the finite simple group $\PSL_2(k)$ must be a subquotient of an open compact subgroup of $\prod_{q \ne p} G^\circ(\QQ_q)$; but this is not possible, since $\prod_{q \ne p} G^\circ(\QQ_q)$ is a compact subgroup of $\prod_{q \ne p} \GL_2(L \otimes \QQ_q)$ for any \'etale extension $L/F$ which splits $B$, and thus is conjugate to a closed subgroup of the maximal compact subgroup $\prod_{q \ne p} \GL_2(\cO_L \otimes \ZZ_q)$, and we know that this group does not have $\PSL_2(k)$ as a quotient by Lemma \ref{lemma:groupsoccurring}. Hence $U^\circ \cap G^\circ(\Zp)$ is a subgroup of $G^\circ(\Zp) = \SL_2(\cO_F \otimes \Zp)$ which surjects onto $\PSL_2(k_p)$, and by Lemma \ref{lemma:lifting}, this implies that $G^\circ(\Zp) \subseteq U^\circ$.
    
    So we have $\prod_{p \notin S} G^\circ(\Zp) \subseteq U^\circ$. In order to show that $U^\circ$ is open in $G^\circ(\hat\QQ)$, it therefore suffices to show that the image of $U^\circ$ is open in $\prod_{p \in S} G^\circ(\Qp)$. However, since $G^\circ(\Qp)$ contains a finite-index pro-$p$ subgroup for each $p \in S$, and $S$ is finite, one sees easily by induction on $\#S$ that any subgroup of $\prod_{p \in S} G^\circ(\Qp)$ whose projection to $G^\circ(\Qp)$ is open for all $p \in S$ must itself be open.
   \end{proof}

   \begin{theorem}
    \label{thm:mainthm}
    Let $U$ be a compact subgroup of $G(\hat\QQ)$, where $\hat\QQ = \QQ \otimes \hat\ZZ$ is the finite adeles of $\QQ$, such that:
    \begin{itemize}
     \item for every prime $p$, the projection of $U$ to $G(\Qp)$ is open in $G(F \otimes \Qp)$;
     \item for all but finitely many primes $p$, the projection of $U$ to $G(\Qp)$ is $G(\Zp)$;
     \item the image of $U$ in $\hat\QQ^\times$ is open.
    \end{itemize}
    Then $U$ is open in $G(\hat\QQ)$.
   \end{theorem}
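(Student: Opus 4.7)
The plan is to reduce the theorem to Theorem~\ref{thm:mainthm0}, applied to a suitable subgroup contained in $U^\circ := U \cap G^\circ(\hat\QQ)$, and then to combine the resulting openness of $U^\circ$ in $G^\circ(\hat\QQ)$ with the openness of $\lambda(U)$ in $\hat\QQ^\times$.

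The first step is to check that the closure of the commutator subgroup $U' := \overline{[U,U]}$ satisfies the hypotheses of Theorem~\ref{thm:mainthm0}. Since $G/G^\circ$ embeds into $\Gm$ and is therefore commutative, $[G,G] \subseteq G^\circ$, so $U' \subseteq G^\circ(\hat\QQ)$. At each prime $p$, the projection of $U'$ to $G^\circ(\Qp)$ is $\overline{[U_p, U_p]}$, where $U_p$ is the projection of $U$. Since $U_p$ is open in $G(\Qp)$, its Lie algebra equals that of $G(\Qp)$, and the derived Lie algebra coincides with that of $G^\circ$ (because $G^\circ$ is semisimple and equals the derived subgroup of $G$); the Baker--Campbell--Hausdorff formula then shows $\overline{[U_p, U_p]}$ is open in $G^\circ(\Qp)$. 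For almost all $p$ we have $U_p = G(\Zp)$ by hypothesis, and for $p$ sufficiently large and unramified in $F$ and $B$ one has $[G(\Zp), G(\Zp)] = G^\circ(\Zp)$ by a standard calculation (generalizing $[\GL_2(\Zp), \GL_2(\Zp)] = \SL_2(\Zp)$ for $p \ge 5$). Applying Theorem~\ref{thm:mainthm0} to $U'$ yields that $U'$ is open in $G^\circ(\hat\QQ)$, and hence so is $U^\circ \supseteq U'$.

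For the second step, it remains to deduce from openness of $U^\circ$ in $G^\circ(\hat\QQ)$ and openness of $\lambda(U)$ in $\hat\QQ^\times$ that $U$ itself is open in $G(\hat\QQ)$. The morphism $\lambda\colon G \to \Gm$ is a smooth surjective morphism of algebraic groups with kernel $G^\circ$, so at each prime the submersion theorem (Hensel's lemma) provides a continuous local section of $\lambda$ near the identity. The restriction $\lambda|_U\colon U \to \lambda(U)$ is a continuous surjective homomorphism of locally compact groups and is therefore open by the open mapping theorem, which allows one to choose these local sections with values in $U$; patching across primes yields a continuous section $s\colon W \to U$ with $s(1) = e$, on some open neighbourhood $W \subseteq \lambda(U)$ of $1$. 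For $g \in G(\hat\QQ)$ sufficiently close to the identity, one has $\lambda(g) \in W$ and the factorisation $g = \bigl(g \cdot s(\lambda(g))^{-1}\bigr) \cdot s(\lambda(g))$ expresses $g$ as the product of an element of $G^\circ(\hat\QQ)$ close to $e$ (hence in $U^\circ$, by openness of $U^\circ$) and an element of $U$, forcing $g \in U$. This produces an open neighbourhood of $e$ inside $U$.

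The main obstacle is the continuous-section argument in the second step: combining the smoothness of $\lambda$ with the open mapping theorem for $\lambda|_U$ and patching the resulting local data across primes to obtain a continuous section of $\lambda$ with image in $U$. The first step, by contrast, is largely parallel to the proof of Theorem~\ref{thm:mainthm0} once one identifies $\overline{[U,U]}$ as the correct subgroup to which that theorem should be applied.
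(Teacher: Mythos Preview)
Your first step is correct and essentially identical to the paper's argument: the paper also observes that the closure of the commutator subgroup of $U$ lands in $G^\circ(\hat\QQ)$, checks the two surjectivity hypotheses of Theorem~\ref{thm:mainthm0} prime by prime (perfectness of $\SL_2(\cO_F \otimes \Zp)$ for $p \ge 5$, and a Lie-algebra argument for the remaining primes), and concludes that $U^\circ$ is open in $G^\circ(\hat\QQ)$. The paper works directly with $U^\circ$ rather than with $U' = \overline{[U,U]}$, but since $U' \subseteq U^\circ$ this is immaterial.

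Your second step, however, is both more elaborate than necessary and contains a gap. Smoothness of $\lambda : G \to \Gm$ does give, via Hensel, a continuous local section of $\lambda$ on $G(\hat\QQ)$; but there is no mechanism by which the open mapping theorem for $\lambda|_U$ converts this into a section taking values in $U$. Openness of $\lambda|_U$ only identifies $\lambda(U)$ with $U/U^\circ$ topologically; it does not split that quotient. And ``patching across primes'' would at best produce a section into $\prod_p U_p$, which is strictly larger than $U$ in general. (It is true that any continuous surjection of profinite groups admits a continuous section, and invoking that fact would repair your argument; but that is not what you wrote, and it has nothing to do with Hensel or the open mapping theorem.)

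The paper treats this step as a triviality, and indeed the underlying argument is elementary. Since $U$ is compact it lies in some compact open subgroup $K$ of $G(\hat\QQ)$; writing $K^\circ = K \cap G^\circ(\hat\QQ)$, one has
\[
[K : U] \;=\; [K : U K^\circ]\cdot[U K^\circ : U] \;=\; [\lambda(K) : \lambda(U)]\cdot[K^\circ : U^\circ],
\]
and both factors on the right are finite: the first because $\lambda(U)$ is open in $\hat\QQ^\times$, the second because $U^\circ$ is open in $G^\circ(\hat\QQ)$. Hence $U$ has finite index in the open group $K$ and is itself open. No sections are required, and what you flagged as ``the main obstacle'' is in fact the routine half of the proof.
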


   \begin{proof}
    Let $U^\circ = U \cap G^\circ(\hat\QQ)$. We claim $U^\circ$ satisfies the hypotheses of the previous theorem.

    Since $G(\hat\QQ) / G^\circ(\hat\QQ) \cong \hat\QQ^\times$ is abelian, the group $U^\circ$ contains the closure of the commutator subgroup of $U$. Since $\SL_2(\cO_F \otimes \Zp)$ is the closure of its own commutator subgroup for $p \ge 5$, we see that if $p \ge 5$ and $U$ surjects onto $G(\Zp)$, then $U^\circ$ surjects onto $G^\circ(\Zp)$.

    Let us show that for an arbitrary prime $p$, the commutator subgroup of $G^\circ(\Zp)$ has finite index. It suffices to show the corresponding result for $\SL_1(\mathcal{O}_B)$ for $B$ a quaternion algebra (possibly split) over a $p$-adic field; and this is equivalent to the statement that the Lie algebra $\mathfrak{sl}_1(B)$ is a nontrivial simple Lie algebra, which is clear since it becomes isomorphic to $\mathfrak{sl}_2$ after base extension to any splitting field of $B$.

    By the previous theorem, we conclude that $U$ contains an open subgroup of $G^\circ(\hat\QQ)$. But the image of $U$ in $\hat\QQ^\times$ is open by hypothesis, so we conclude that $U$ is in fact open in $G(\hat\QQ)$.
   \end{proof}

   \begin{remark}
    We cannot dispense with the hypothesis that the image of $U$ in $\hat\QQ^\times$ is open: there exist proper closed subgroups of $\hat\ZZ^\times$ whose projection to $\ZZ_p^\times$ is open for all $p$, but which are not open in $\hat\ZZ^\times$, such as the group $\hat\ZZ^{\times 2}$. We may even arrange that the projection to $\ZZ_p^\times$ is surjective for all $p$, as with the group $\{ x : x_p \in \ZZ_p^{\times 2}\ \forall p > 2\} \cup \{ x : x_p \notin \ZZ_p^{\times 2} \ \forall p > 2\}$.
   \end{remark}

 \section{Large image results for one modular form}
  \label{sect:largeimage}
  
  \subsection{Setup}

   Let $f$ be a normalized cuspidal modular newform of weight $k \ge 2$, level $N$ and character $\varepsilon$. We write $L = \QQ(a_n(f): n \ge 1)$ for the number field generated by the $q$-expansion coefficients of $f$. Note that $L$ is totally real if $\varepsilon = 1$, and is a CM field if $\varepsilon \ne 1$.

   \begin{definition}\mbox{~}
    \begin{enumerate}
     \item For $p$ prime, we write
     \[ \rho_{f, p}: G_{\QQ} \to \GL_2(L \otimes \Qp) \]
     for the unique (up to isomorphism) representation satisfying $\operatorname{Tr} \rho_f(\sigma_\ell^{-1}) = a_\ell(f)$ for all $\ell \nmid Np$, where $\sigma_\ell$ is the arithmetic Frobenius.
     \item We write
     \[\rho_{f}: G_{\QQ} \to \GL_2(L \otimes \hat\QQ) \]
     for the product representation $\prod_p \rho_{f, p}$, where $\hat\QQ$ is the ring of finite adeles of $\QQ$.
    \end{enumerate}
   \end{definition}

   The condition (1) only determines $\rho_{f, p}$ up to conjugacy, and we can (and do) assume that its image is contained in $\GL_2(\cO_L \otimes \Zp)$, where $\cO_L$ is the ring of integers of $L$. Thus $\rho_f$ takes values in $\GL_2(\cO_L \otimes \widehat\ZZ)$, where $\widehat\ZZ = \prod_p \Zp$ is the profinite completion of $\ZZ$.

   \begin{remark}
    Our normalizations are such that if $f$ has weight 2, $\rho_{f, p}$ is the representation appearing in the \'etale cohomology of $X_1(N)$ with trivial coefficients. Some authors use an alternative convention that $\operatorname{Tr} \rho_f(\sigma_\ell) = a_\ell(f)$, which gives the representation appearing in the Tate module of the Jacobian $J_1(N)$; this is exactly the dual of the representation we study, so the difference between the two is unimportant when considering the image.
   \end{remark}

  \subsection{The theorems of Momose, Ribet, and Papier}

   For $\chi$ a Dirichlet character, we let $f \otimes \chi$ denote the unique newform such that $a_\ell(f \otimes \chi) = \chi(n) a_\ell(f)$ for all but finitely many primes $\ell$.

   \begin{definition}[{\cite[\S 3]{ribet85}}]
    An \emph{inner twist} of $f$ is a pair $(\gamma, \chi)$, where $\gamma: L \into \CC$ is an embedding and $\chi$ is a Dirichlet character, such that the conjugate newform $f^\gamma$ is equal to the twist $f \otimes \chi$.
   \end{definition}

   Note that we always have $\overline{f} = f \otimes \varepsilon^{-1}$, so any newform of non-trivial character has at least one nontrivial inner twist.

   Lemma 1.5 of \cite{momose81} shows that if $(\gamma, \chi)$ is an inner twist of $f$, then $\chi$ takes values in $L^\times$ and $\gamma(L) = L$. Thus the inner twists $(\gamma, \chi)$ of $f$ form a group $\Gamma$ with the group law
   \[ (\gamma, \chi) \cdot (\sigma, \mu) = (\gamma \cdot \sigma, \chi^{\sigma} \cdot \mu). \]
   Moreover, for any $(\gamma, \chi) \in \Gamma$, the conductor of $\chi$ divides $N$ if $N$ is odd, and divides $4N$ if $N$ is even.

   It is well known that if there exists a nontrivial $\chi$ such that $f \otimes \chi = f$, then $f$ must be of CM type and $\chi$ must be the quadratic Dirichlet character attached to the corresponding imaginary quadratic field.

   We now assume (until further notice) that $f$ is not of CM type. Thus, for any inner twist $(\gamma, \chi) \in \Gamma$, the Dirichlet character $\chi$ is uniquely determined by $f$ and $\gamma$, and we write it as $\chi_\gamma$. The map $(\gamma, \chi) \mapsto \gamma$ identifies $\Gamma$ with an abelian subgroup of $\operatorname{Aut}(L / \QQ)$; we write $F$ for the subfield of $L$ fixed by $\Gamma$. The extension $L / F$ is Galois, with Galois group $\Gamma$ \cite[Proposition 1.7]{momose81}.

   Let us write $H$ for the open subgroup of $G_\QQ$ which is the intersection of the kernels of the Dirichlet characters $\chi_\gamma$ for $\gamma \in \Gamma$, interpreted as characters of $G_{\QQ}$ in the usual way. Then for all $\sigma \in H$ we have $\operatorname{Tr} \rho_f(\sigma) \in F \otimes \hat\QQ$.

   \begin{theorem}[Momose, Ribet, Ghate--Gonzalez-Jimenez--Quer]
    There exists a central simple algebra $B$ of degree 2 over $F$, unramified outside $2 N \operatorname{disc}(L / \QQ) \infty$, and an embedding $B \into M_{2 \times 2}(L)$, with the following property: we have
    \[ \rho_{f}(H) \subseteq B(F \otimes \hat\QQ)^\times \subseteq \GL_2(L \otimes \hat\QQ).\]
    Moreover, for all but finitely many primes $p$ we have $B \otimes \Qp = M_{2 \times 2}(F \otimes \Qp)$, and we may conjugate $\rho_{f, p}$ such that
    \[ \rho_{f, p}(H) = \{ x \in \GL_2(\cO_{F} \otimes \Zp) : \operatorname{det} x \in \ZZ_p^{\times (k - 1)} \} \tag{\dag}.\]
   \end{theorem}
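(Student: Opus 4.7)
This theorem is a combination of results of Momose \cite{momose81}, Ribet \cite{ribet85}, and Ghate--Gonzalez-Jimenez--Quer, and my plan is to assemble it from three essentially independent pieces: constructing the algebra $B$, bounding its ramification, and describing $\rho_{f,p}(H)$ at primes of good reduction.

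For the first piece, for each inner twist $(\gamma, \chi_\gamma) \in \Gamma$ the representations $\gamma \circ \rho_f$ and $\chi_\gamma \otimes \rho_f$ are isomorphic (by definition of an inner twist, together with absolute irreducibility of $\rho_f$, which holds because $f$ is non-CM). Hence there is a matrix $M_\gamma \in \GL_2(L \otimes \hat\QQ)$, unique up to $L^\times$-scalar, with $\gamma(\rho_f(\sigma)) = \chi_\gamma(\sigma) M_\gamma^{-1} \rho_f(\sigma) M_\gamma$. The collection $\{M_\gamma\}$ gives a 2-cocycle valued in $L^\times$, whose class in $H^2(\Gamma, L^\times) = \operatorname{Br}(L/F)$ defines a central simple $F$-algebra $B$ of degree 2. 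Descent identifies $B$ with the $F$-subalgebra of $M_2(L \otimes \hat\QQ)$ consisting of $x$ satisfying $\gamma(x) = M_\gamma^{-1} x M_\gamma$ for all $\gamma \in \Gamma$; for $\sigma \in H$ we have $\chi_\gamma(\sigma) = 1$, so $\rho_f(\sigma)$ lies in this subalgebra, giving $\rho_f(H) \subseteq B(F \otimes \hat\QQ)^\times$.

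For the second piece, one studies the local Brauer invariants of $B$ place by place. At a prime $p \nmid 2N$ which is unramified in $L/\QQ$, the representation $\rho_{f,p}$ is crystalline, every $\chi_\gamma$ is unramified (its conductor divides $4N$), and one can choose the $M_\gamma$ to preserve an integral lattice. A standard local-class-field-theory calculation identifies the local invariant of $B$ at $p$ with data coming from unramified cohomology and shows it vanishes; thus $B \otimes \Qp = M_2(F \otimes \Qp)$ for all such $p$. Obtaining the sharp ramification set $2N \operatorname{disc}(L/\QQ) \infty$ rather than Momose's coarser bound is the contribution of Ghate--Gonzalez-Jimenez--Quer and requires a delicate analysis at $2$ and at primes dividing $\operatorname{disc}(L/\QQ)$ but not $N$.

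For the third piece, at a prime $p$ where $B \otimes \Qp$ splits, conjugation puts $\rho_{f,p}(H)$ inside $\GL_2(\cO_F \otimes \Zp)$, and the containment of the image in the right-hand side of (\dag) is immediate from $\det \rho_f = \chi_{\mathrm{cyc}}^{k-1} \varepsilon$ together with $\varepsilon|_H = 1$. The reverse inclusion follows from Ribet's large-image theorem: the mod-$p$ reduction of $\rho_{f,p}|_H$ surjects onto $\PSL_2(k_p)$ for all but finitely many $p$, and Lemma~\ref{lemma:lifting} then upgrades this to surjectivity onto $\SL_2(\cO_F \otimes \Zp)$, which combined with the determinant condition yields (\dag). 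The main obstacle is the bookkeeping in the second piece to sharpen the ramification set to $2N \operatorname{disc}(L/\QQ) \infty$; the other pieces follow cleanly from the cited results, so in practice the theorem will be cited rather than reproved in the paper.
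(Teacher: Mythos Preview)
Your proposal is correct and aligns with the paper's approach: the paper's proof is simply a citation, stating that the result is mostly proved in \cite{ribet85} building on \cite{momose81}, with the explicit ramification bound $2N\operatorname{disc}(L/\QQ)\infty$ coming from Corollary~4.7 of \cite{GhateGJQuer}. You correctly anticipated this in your final sentence, and your three-piece sketch accurately summarizes what those cited papers actually do; the only minor slips are conventional (the determinant is $\varepsilon^{-1}\chi^{1-k}$ rather than $\chi_{\mathrm{cyc}}^{k-1}\varepsilon$ in the paper's normalization, though this does not affect the image $\ZZ_p^{\times(k-1)}$) and some looseness in distinguishing $B$ from $B \otimes \hat\QQ$ in the descent description.
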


   \begin{proof}
    This result is mostly proved in \cite{ribet85}, building on earlier results of Momose \cite{momose81}; the only statement not covered there is the explicit bound on the set of primes at which $B$ may ramify, which is Corollary 4.7 of \cite{GhateGJQuer}.
   \end{proof}

   We will need later in the paper the following refinement:

   \begin{corollary}[Papier]\label{papier}
    Let $p$ be such that $B$ and $L$ are unramified above $p$, and $\rho_{f, p}(H)$ is the whole group $(\dag)$; and let $\sigma \in G_{\QQ(\mu_{p^\infty})}$. Then the image of the coset $\sigma \cdot \left(H \cap G_{\QQ(\mu_{p^\infty})}\right)$ under $\rho_{f, p}$ is the set
    \[ \tbt \alpha 0 0 {\varepsilon(\sigma) \alpha^{-1}}\SL_2(\cO_F \otimes \Zp),\]
    for any $\alpha \in (\cO_L \otimes \Zp)^\times$ such that $\gamma(\alpha) = \chi_\gamma(\sigma) \alpha$ for all $\gamma$ in $\Gal(L / F)$.
   \end{corollary}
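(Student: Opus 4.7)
The plan is to combine the inner twist structure of $\rho_{f,p}$ with the explicit description of $\rho_{f,p}(H)$ from the preceding theorem. Fix the basis in which $\rho_{f,p}(H)$ equals the full group $(\dag)$, and for each $\gamma\in\Gamma$ pick an intertwining operator $M_\gamma\in\GL_2(L\otimes\Qp)$ satisfying $\gamma\circ\rho_{f,p}=\chi_\gamma\cdot M_\gamma\rho_{f,p}M_\gamma^{-1}$. Specializing to $\tau\in H$ (where $\chi_\gamma(\tau)=1$ and $\rho_{f,p}(\tau)$ is $\gamma$-fixed, since its entries lie in $\cO_F\otimes\Zp$), we see that $M_\gamma$ centralizes $\SL_2(\cO_F\otimes\Zp)\subseteq\rho_{f,p}(H)$; working componentwise over $L\otimes\Qp=\prod_{\frp\mid p}L_\frp$, absolute irreducibility of the standard $\SL_2$-representation forces $M_\gamma$ to be scalar. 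Since scalars commute with $\rho_{f,p}(\tau)$, the intertwining identity collapses to
\[
\gamma(\rho_{f,p}(\tau)) = \chi_\gamma(\tau)\,\rho_{f,p}(\tau) \qquad \text{for all } \tau\in G_\QQ,\ \gamma\in\Gamma,
\]
and this refined identity is what the rest of the argument uses. I expect this scalar-intertwiner reduction to be the main conceptual step; the rest is essentially bookkeeping.

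Next I would show that $\rho_{f,p}(H\cap G_{\QQ(\mu_{p^\infty})}) = \SL_2(\cO_F\otimes\Zp)$. The inclusion $\subseteq$ holds because $\det\rho_{f,p}=\varepsilon\cdot\chi_{\mathrm{cyc}}^{k-1}$ and $\varepsilon|_H=1$ (trivially if $L$ is totally real; otherwise complex conjugation lies in $\Gamma$ with $\chi_c=\varepsilon^{-1}$). For the reverse inclusion, $H/(H\cap G_{\QQ(\mu_{p^\infty})})$ injects into $\Zp^\times$ via $\chi_{\mathrm{cyc}}$ and is in particular abelian, so the commutator subgroup of $\rho_{f,p}(H)=(\dag)$ lies in $\rho_{f,p}(H\cap G_{\QQ(\mu_{p^\infty})})$; but $\SL_2(\cO_F\otimes\Zp)$ is perfect for $p\ge 5$ with $F$ unramified above $p$, hence equals this commutator subgroup.

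To produce $\alpha$, I would note that the twist-composition law $\chi_{\gamma\delta} = \chi_\gamma^\delta\chi_\delta$ makes $\gamma\mapsto\chi_\gamma(\sigma)$ a $1$-cocycle $\Gamma\to(\cO_L\otimes\Zp)^\times$. Since $L/F$ is unramified above $p$, Hilbert 90 for units in unramified extensions (applied at each prime of $F$ above $p$) exhibits this cocycle as a coboundary, producing $\alpha\in(\cO_L\otimes\Zp)^\times$ with $\gamma(\alpha)=\chi_\gamma(\sigma)\alpha$. Any two such choices differ by an element of $(\cO_F\otimes\Zp)^\times$, which placed on the diagonal lies in $\SL_2(\cO_F\otimes\Zp)$, so the resulting coset is independent of the choice of $\alpha$.

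For the final verification, set $Y := \tbt{\alpha^{-1}}{0}{0}{\varepsilon(\sigma)^{-1}\alpha}\rho_{f,p}(\sigma)$, which has determinant $1$. Taking determinants in the refined inner-twist identity at $\tau=\sigma$ gives $\gamma(\varepsilon(\sigma))=\chi_\gamma(\sigma)^2\varepsilon(\sigma)$, and combining this with $\gamma(\alpha)=\chi_\gamma(\sigma)\alpha$ and $\gamma(\rho_{f,p}(\sigma))=\chi_\gamma(\sigma)\rho_{f,p}(\sigma)$ yields $\gamma(Y)=Y$ for every $\gamma\in\Gamma$. Hence the entries of $Y$ lie in $(\cO_L\otimes\Zp)\cap(F\otimes\Qp)=\cO_F\otimes\Zp$, so $Y\in\SL_2(\cO_F\otimes\Zp)$. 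The coset identity
\[
\rho_{f,p}\bigl(\sigma\cdot(H\cap G_{\QQ(\mu_{p^\infty})})\bigr) = \rho_{f,p}(\sigma)\cdot\SL_2(\cO_F\otimes\Zp) = \tbt{\alpha}{0}{0}{\varepsilon(\sigma)\alpha^{-1}}\SL_2(\cO_F\otimes\Zp)
\]
is then immediate.
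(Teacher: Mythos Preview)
Your argument is correct. The paper itself does not give a self-contained proof: it simply cites Theorem~4.1 of \cite{ribet85} (due to Papier) and adds the observation that, since $L/F$ is unramified above $p$, Ribet's global $\alpha \in L^\times$ can be rescaled by an element of $F^\times$ (or of $(F\otimes\Qp)^\times$) to lie in $(\cO_L\otimes\Zp)^\times$. Your proof, by contrast, reconstructs the argument from scratch.

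The two routes differ in how $\alpha$ is produced. Ribet works globally, obtaining $\alpha\in L^\times$ from ordinary Hilbert~90 for $L/F$, and the paper then uses unramifiedness to adjust valuations. You instead apply Hilbert~90 for local units in the unramified extension $(\cO_L\otimes\Zp)^\times$ over $(\cO_F\otimes\Zp)^\times$, giving an integral $\alpha$ directly; this is cleaner and makes the role of the unramified hypothesis more transparent. Your reduction of the intertwiner $M_\gamma$ to a scalar via Schur applied to $\SL_2(\cO_F\otimes\Zp)$, and the commutator argument identifying $\rho_{f,p}(H\cap G_{\QQ(\mu_{p^\infty})})$ with $\SL_2(\cO_F\otimes\Zp)$, are exactly the ingredients underlying Ribet's proof, so substantively the two approaches agree. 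One small caveat: your perfectness claim for $\SL_2(\cO_F\otimes\Zp)$ needs the residue fields to have order $\ge 4$; this is automatic for $p\ge 5$, which is implicit in the ``all but finitely many $p$'' context of the preceding theorem, but you may want to flag it.
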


   \begin{proof}
    See \cite[Theorem 4.1]{ribet85}. (Strictly speaking, Ribet in fact only shows that there is $\alpha \in L^\times$ with this property, and excludes any primes $p$ such that $\alpha$ is not a $p$-adic unit. However, since we have assumed $L / F$ is unramified above $p$, we can always re-scale $\alpha$ to be a $p$-adic unit.)
   \end{proof}

  \subsection{Adelic open image for $\GL_2$}

   Since the determinant of $\rho_f|_H$ is $\chi^{1-k}$, where $\chi: G_{\QQ} \to \hat\ZZ^\times$ is the adelic cyclotomic character, we can extend $\rho_f$ to a homomorphism $\tilde\rho_f: H \to G(\hat\QQ)$, where $G$ is the algebraic group of Definition \ref{def:G} (for the specific choices of $B$, $F$ and $k$ as in this section). This homomorphism is characterized by the requirement that its projection to $\GL_2(L \otimes \hat\QQ)$ is $\rho_f$, and its projection to $\hat\QQ^\times$ is the cyclotomic character.

   Applying Theorem \ref{thm:mainthm} to $\tilde\rho_f(H)$, we obtain the first new result of this paper:

   \begin{theorem}
    \label{thm:bigimageGL2}
    The image of $H$ under $\tilde\rho_f$ is an open subgroup of $G(\hat\QQ)$.
   \end{theorem}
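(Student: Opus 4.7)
My plan is to apply Theorem~\ref{thm:mainthm} directly to $U := \tilde\rho_f(H) \subseteq G(\hat\QQ)$, which reduces the problem to verifying the three hypotheses of that theorem.

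The hypothesis on the open image in $\hat\QQ^\times$ is immediate from the construction of $\tilde\rho_f$: the projection $G \to \Gm$ composed with $\tilde\rho_f$ is the cyclotomic character $\chi$, whose restriction to the finite-index open subgroup $H \subseteq G_\QQ$ has open image in $\hat\ZZ^\times$. The hypothesis of $p$-adic openness for every prime $p$ is the ``(A)'' analogue for modular forms due to Momose~\cite{momose81}: the image $\rho_{f,p}(H)$ is open in $B(F \otimes \Qp)^\times$; this, together with the open image of $\chi_p$ in $\Qp^\times$, yields openness of $\tilde\rho_{f,p}(H)$ in $G(\Qp)$.

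The remaining hypothesis, namely $\tilde\rho_{f,p}(H) = G(\Zp)$ for almost all $p$, is an adelic repackaging of the Momose--Ribet--Ghate--Gonzalez-Jimenez--Quer theorem stated above: for all but finitely many $p$, the algebra $B$ is unramified at $p$ and $\rho_{f,p}(H)$ coincides with the $\GL_2$-component $(\dag)$ of $G(\Zp)$. Combining this with the surjectivity of $\chi_p|_H$ onto $\Zp^\times$ (valid for almost all $p$) and the defining relation $\det \rho_{f,p} = \chi_p^{1-k}$ on $H$, a fibre-matching argument along the $(k-1)$-fold covering $G(\Zp) \twoheadrightarrow (\dag)$ delivers $\tilde\rho_{f,p}(H) = G(\Zp)$, after possibly enlarging the finite exceptional set of primes.

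The proof is therefore essentially a bookkeeping exercise: the deep inputs (Momose, Ribet, and Ghate--Gonzalez-Jimenez--Quer at each prime, together with Theorem~\ref{thm:mainthm} for the adelic gluing) are already in hand. The only mildly delicate point I anticipate is confirming the last fibre-matching step, namely that the image lands in \emph{all} of $G(\Zp)$ rather than a proper subgroup of it sharing its two projections; this is routine once the relation between $\det\rho_{f,p}$ and $\chi_p$ is unwound.
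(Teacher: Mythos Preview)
Your proposal is correct and follows exactly the paper's approach: the paper's own proof is the single sentence ``Applying Theorem~\ref{thm:mainthm} to $\tilde\rho_f(H)$, we obtain the first new result of this paper'', and you have simply spelled out the verification of its three hypotheses. The only point worth noting is that your ``fibre-matching'' step (showing $\tilde\rho_{f,p}(H)=G(\Zp)$ from $\rho_{f,p}(H)=(\dag)$ and $\chi_p(H)=\Zp^\times$) is most cleanly done via the commutator argument already used inside the proof of Theorem~\ref{thm:mainthm}: since $\SL_2(\cO_F\otimes\Zp)$ is perfect for $p\ge 5$, the derived subgroup of $\tilde\rho_{f,p}(H)$ is all of $G^\circ(\Zp)$, and surjectivity onto $\Zp^\times$ then forces $\tilde\rho_{f,p}(H)=G(\Zp)$.
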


   \begin{remark}
    One can show exactly the same result with modular forms replaced by Hilbert modular forms for a totally real field $E$, since the Momose--Ribet theorem has been generalized to this context by Nekov\'a\v{r} \cite[Theorem B.4.10]{nekovar12}. We have stated the result only for elliptic modular forms in order to save notation.
   \end{remark}

  \subsection{The CM case}

   For completeness, we briefly describe the image of $\tilde\rho_f$ in the CM case.
   
   Let us now suppose $f$ is of weight $k \ge 2$ and is of CM type, associated to some Hecke character
   \[ \psi: \hat{K}^\times \to \tilde{L}^\times\]
   for some imaginary quadratic field $K$ and Gr\"ossencharacter $\psi$ of infinity-type $(1-k, 0)$, with $\psi$ taking values in some extension $\tilde{L}$ of $L$. The relation between $f$ and $\psi$ is given by
   \[ a_p(f) = \psi(\varpi_\frp) + \psi(\varpi_{\frp'})\]
   whenever $p$ is a rational prime, not dividing the level of $f$, which splits in $K$ as $\frp \frp'$. Here $\varpi_{\frp} \in \widehat{K}^\times$ is a uniformizer at $\frp$.

   Let us write $\hat\psi$ for the homomorphism $K^\times \backslash \hat{K}^\times \to (\hat{K} \otimes_K \tilde{L})^\times$ defined by
   \[ \hat\psi(x) = x^{1-k} \psi(x). \]
   If we identify $K^\times \backslash \hat{K}^\times$ with $G_K^{\mathrm{ab}}$ via the Artin map\footnote{Normalized in the French manner, so geometric Frobenius elements correspond to uniformizers.}, then the adelic Galois representation $\rho_g$ is given by $\Ind_{G_K}^{G_{\QQ}} (\hat\psi)$.

   Note that there is a finite-index subgroup $U \subseteq \hat\cO_K^\times$ contained in the kernel of $\psi$; thus the image of $\hat\psi$ contains a finite-index subgroup of the group $\{ x^{1 - k} : x \in \hat\cO_K^\times\}$. In particular, for almost all primes $p$ the image of $G_K$ under $\rho_{g, p}$ contains the group
   \[ \left\{ \tbt{x^{1-k}}{0}{0}{\bar{x}^{1-k}} : x \in (\cO_K \otimes \Zp)^\times \right\}.\]

 \section{Joint large image}
   \label{sect:jointlargeimage}

  \subsection{Preliminaries}

   Now let $f$, $g$ be two newforms of weights $k_f, k_g \ge 2$, levels $N_f, N_g$ and characters $\varepsilon_f$ and $\varepsilon_g$, respectively. We assume neither $f$ nor $g$ is of CM type, and we write $L_f, L_g$ for their coefficient fields. We will need the following lemma:

   \begin{lemma}
    \label{lemma:ramakrishnan}
    Suppose there exist embeddings $L_f, L_g \into \CC$ such that we have
    \[ \frac{a_\ell(f)^2}{\ell^{k_f-1} \varepsilon_f(\ell)} = \frac{a_\ell(g)^2}{\ell^{k_g-1} \varepsilon_g(\ell)} \]
    for a set of primes $\ell$ of positive upper density. Then there is a Dirichlet character $\chi$ such that $g = f \otimes \chi$.
   \end{lemma}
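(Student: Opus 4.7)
The plan is to rewrite the hypothesis in terms of adjoint Galois representations and then appeal to theorems of Ramakrishnan. For a prime $\ell \nmid N_f$, let $\alpha_\ell(f), \beta_\ell(f)$ be the roots of the Hecke polynomial $X^2 - a_\ell(f) X + \ell^{k_f-1}\varepsilon_f(\ell)$, so that $\alpha_\ell(f)\beta_\ell(f) = \ell^{k_f-1}\varepsilon_f(\ell)$. Then
\[
\frac{a_\ell(f)^2}{\ell^{k_f-1}\varepsilon_f(\ell)}
\;=\; \frac{(\alpha_\ell(f)+\beta_\ell(f))^2}{\alpha_\ell(f)\beta_\ell(f)}
\;=\; 2 + \frac{\alpha_\ell(f)}{\beta_\ell(f)} + \frac{\beta_\ell(f)}{\alpha_\ell(f)},
\]
which (after fixing an auxiliary prime $p$) is exactly $\operatorname{Tr}\bigl(\rho_{f,p}\otimes \rho_{f,p}^\vee\bigr)(\sigma_\ell^{-1})$, and thus depends only on the projectivisation $\overline{\rho}_{f,p} \colon G_\QQ \to \PGL_2$. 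The analogous identity holds for $g$, so the hypothesis is equivalent to asserting that the traces of the three-dimensional adjoint Galois representations attached to $f$ and $g$ (the trace-zero part of $\rho_\bullet \otimes \rho_\bullet^\vee$) agree on a set of primes $\ell$ of positive upper density.

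Since $f, g$ are non-CM, the Gelbart--Jacquet lift produces cuspidal automorphic representations $\pi_f^{\mathrm{ad}}, \pi_g^{\mathrm{ad}}$ of $\GL_3(\AA_\QQ)$ realising these two adjoint Galois representations. A positive-density coincidence of Hecke eigenvalues for two cuspidal representations of $\GL_n$ forces a pole at $s=1$ of the Rankin--Selberg $L$-function $L(s, \pi_f^{\mathrm{ad}} \times (\pi_g^{\mathrm{ad}})^\vee)$ --- one uses the standard positivity of the coefficients of $\log L$ at prime powers --- and hence, by Ramakrishnan's refinement of strong multiplicity one, $\pi_f^{\mathrm{ad}} \cong \pi_g^{\mathrm{ad}}$. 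Finally, Ramakrishnan's theorem that the adjoint lift of a non-CM cusp form determines it up to twist gives $\pi_g \cong \pi_f \otimes \eta$ for some idele class character $\eta$; matching archimedean weights forces $\eta$ to be of finite order, so $\eta$ is a Dirichlet character $\chi$, yielding $g = f \otimes \chi$.

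The main obstacle is the middle step. Chebotarev alone would only yield the conclusion if the traces agreed on a density-$1$ set of Frobenius elements; here only positive upper density is assumed, and the Rankin--Selberg positivity argument is precisely what bridges the gap between the analytic density statement and the algebraic isomorphism of adjoint representations.
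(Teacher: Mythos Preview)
The paper gives no argument beyond citing Theorem~A of \cite{ramakrishnan00b} as a black box. What you have written is a sketch of Ramakrishnan's own proof of that theorem, and the overall architecture --- pass to the adjoint (equivalently $\rho\otimes\rho^\vee$) representation, lift to a cuspidal $\GL_3$ automorphic representation via Gelbart--Jacquet (this is where the non-CM hypothesis from the ambient setup enters), show the two adjoint lifts are isomorphic, and then recover $f$ up to twist from its adjoint --- is correct and matches his strategy.

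The genuine gap is in the middle step. The assertion that ``positive-density coincidence of Hecke eigenvalues for two cuspidal representations of $\GL_n$ forces a pole at $s=1$ of the Rankin--Selberg $L$-function'' is false as a general principle: already for $n=1$, two distinct Dirichlet characters can agree on a set of primes of density $1/2$ without $L(s,\chi_1\overline{\chi_2})$ acquiring a pole. Positivity of the Dirichlet coefficients of $\log L(s,\Pi\times\tilde\Pi)$ for an isobaric $\Pi$ does not by itself convert a positive-density hypothesis into a pole of a \emph{cross} Rankin--Selberg $L$-function. Ramakrishnan's actual argument uses structure specific to this situation --- for instance, the Satake parameters of the adjoint lift at an unramified prime have the shape $\{\lambda_\ell,1,\lambda_\ell^{-1}\}$, so equality of traces at $\ell$ already forces equality of the full local component there --- and the analytic input must be organised more carefully than a one-line appeal to positivity. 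You rightly flag this step as the crux in your final paragraph, but the justification you offer for it does not go through as written.
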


   \begin{proof}
    This is a special case of Theorem A of \cite{ramakrishnan00b}.
   \end{proof}

   \begin{remark}
    Recall that the \emph{upper density} of a set of primes $S$ is defined by
    \[ \operatorname{UD}(S) = \limsup_{X \to \infty} \frac{\# \{ \ell \in S : \ell \le X\}}{\# \{ \ell : \ell \le X\}}.\]
    We will need below the easily-verified fact that if $S_1, \dots, S_n$ are sets of primes, then
    \[ \operatorname{UD}(S_1 \cup \dots \cup S_n) \le \operatorname{UD}(S_1) + \dots + \operatorname{UD}(S_n),\]
    so if $S_1 \cup \dots \cup S_n$ has positive upper density, then at least one of the sets $S_i$ has positive upper density.
   \end{remark}

   We can obviously apply the theory of the previous section to each of $f$ and $g$, and we use the subscripts $f, g$ to refer to the corresponding objects for each form; so we have number fields $F_f, F_g$, quaternion algebras $B_f, B_g$, and algebraic groups $G_f, G_g$.

   We may unify these as follows: we let $F = F_f \times F_g$, which is an \'etale extension of $\QQ$, and $B = B_f \times B_g$, which is a quaternion algebra over $F$; and the group $G^\circ$ of norm 1 elements of $G$ is just $G^\circ_f \times G^\circ_g$. We let
   \[ k: \mathbf{G}_m \to \operatorname{Res}_{F / \QQ} \mathbf{G}_m = \operatorname{Res}_{F_f / \QQ} \mathbf{G}_m \times \operatorname{Res}_{F_g / \QQ} \mathbf{G}_m\]
   be the character sending $\lambda$ to $(\lambda^{k_f}, \lambda^{k_g})$. Then Definition \ref{def:G} gives us an algebraic group
   \begin{align*}
    G &= \{ (x, \lambda) \in B^\times \times \mathbf{G}_m : \norm(x) = \lambda^{1-k}\}\\
    &= \{ (x_f, x_g, \lambda) \in B_f^\times \times B_g^\times \times \mathbf{G}_m: \norm(x_f) = \lambda^{1-k_f}, \norm(x_g) = \lambda^{1-k_g} \}.
   \end{align*}
   This is, of course, just the fibre product of $G_f$ and $G_g$ over $\mathbf{G}_m$. Letting $H = H_f \cap H_g$, we have a representation
   \[ \tilde\rho_{f, g}: G_{\QQ} \to G(\hat\QQ), \]
   and in particular
   \[ \tilde\rho_{f, g, p}: G_{\QQ} \to G(\Qp)\]
   for all primes $p$.

  \subsection{Big image for almost all $p$}

   \begin{proposition}
    \label{prop:bigimage2}
    Let $p \ge 5$ be a prime unramified in $B$, and let $U$ be a subgroup of $G(\Zp)$ which surjects onto $G_f(\Zp)$ and $G_g(\Zp)$. Then either $U = G(\Zp)$, or (after possibly conjugating $U$) there are primes $v \mid p$ of $\cO_{F_f}$, $w \mid p$ of $\cO_{F_g}$ and an isomorphism
    \[ \cO_{F_f, v} \cong  \cO_{F_g, w}, \]
    such that for all $(x, y, \lambda) \in U$ we have $ (y \bmod w) = \pm \lambda^{(k_f-k_g)/2}(x \bmod v)$.
   \end{proposition}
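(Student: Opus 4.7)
The plan is a Goursat-style analysis of $U$ inside the fibre product $G_f(\Zp) \times_{\Gm(\Zp)} G_g(\Zp)$. First I would pass to the norm-$1$ subgroup $U^\circ = U \cap G^\circ(\Zp)$. Since $G/G^\circ$ is abelian, $[U,U]$ lies in $U^\circ$; and since $\SL_2(\cO_F \otimes \Zp)$ is the closure of its own commutator for $p \ge 5$ (as exploited in the proof of Theorem~\ref{thm:mainthm}), the hypothesis that $U$ surjects onto $G_f(\Zp)$ and $G_g(\Zp)$ forces $U^\circ$ to surject onto $G_f^\circ(\Zp)$ and $G_g^\circ(\Zp)$. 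The further surjection $U \to \Zp^\times$ coming from the $\Gm$-quotient then reduces matters to the structure of $U^\circ$.

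Next I would reduce modulo the pro-$p$ radical. Using that $p$ is unramified in $B$, we have $G^\circ(\Zp) = \prod_{v\mid p}\SL_2(\cO_{F_f,v}) \times \prod_{w\mid p}\SL_2(\cO_{F_g,w})$, and the image $\bar U^\circ$ of $U^\circ$ in $\prod_v \PSL_2(k_v^f) \times \prod_w \PSL_2(k_w^g)$ surjects onto every factor. Since $p \ge 5$, each factor is a nonabelian finite simple group, so the Goursat lemma for products of simple groups identifies $\bar U^\circ$ with the subgroup cut out by a family of isomorphisms between paired $\PSL_2$-factors on the two sides. If no such pairing occurs, Lemma~\ref{lemma:lifting} applied coordinatewise yields $U^\circ = G^\circ(\Zp)$, whence $U = G(\Zp)$.

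Otherwise some pair $(v,w)$ carries a nontrivial isomorphism $\phi: \PSL_2(k_v^f)\cong\PSL_2(k_w^g)$. By the lemma asserting that every such isomorphism is $\PGL_2$-conjugate to one induced by a field isomorphism, after conjugating the $w$-component of $U$ by an element of $\GL_2(\cO_{F_g,w})$ lifted via Hensel's lemma, we may assume $\phi$ comes from an isomorphism $k_v^f \cong k_w^g$. Unramifiedness of the local extensions lifts this canonically to $\cO_{F_f,v}\cong\cO_{F_g,w}$. The Goursat constraint then reads $\bar y_w = \pm\phi(\bar x_v)$ in $\SL_2(k_w^g)$ for $(x,y,1)\in U^\circ$, the sign coming from the kernel of $\SL_2\to\PSL_2$. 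For a general $(x,y,\lambda)\in U$, the rescaling $(x,y)\mapsto(\lambda^{(1-k_f)/2}x,\lambda^{(1-k_g)/2}y)$ reduces to the norm-$1$ case (with the square-root ambiguity for odd $k_f-k_g$ absorbed into the $\pm$), yielding the stated relation $y\bmod w = \pm\lambda^{(k_f-k_g)/2}(x\bmod v)$.

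The main obstacle is the Goursat step: one must confirm that the global conjugation required to normalize $\phi$ can be performed inside $G(\Zp)$ without disturbing the surjectivity hypotheses on the other simple factors, and that the paired-factor extraction survives the possibly intricate fibre-product structure over multiple common simple quotients. The sign and half-integer bookkeeping in the rescaling step is mildly awkward when $k_f-k_g$ is odd but is essentially routine.
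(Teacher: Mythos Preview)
Your reduction to $U^\circ$ and the Goursat analysis on the $\PSL_2$ factors is essentially the same as the paper's, just organized slightly differently (the paper applies Goursat directly to the profinite groups and then identifies the maximal closed normal subgroups of $G_f^\circ(\Zp)$ with the kernels of the maps to $\PSL_2(k_v)$, which comes to the same thing).

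The genuine gap is in your last step, passing from $U^\circ$ to a general $(x,y,\lambda)\in U$. The rescaling $(x,y)\mapsto(\lambda^{(1-k_f)/2}x,\ \lambda^{(1-k_g)/2}y)$ does not do what you want: first, $\lambda$ need not be a square in $\Zp^\times$, so these half-integer powers need not exist at all; second, and more seriously, even when they do exist the rescaled pair has no reason to lie in $U^\circ$. It lies in $G^\circ(\Zp)$, but you only know the congruence $x'\equiv\pm y'$ for elements of $U^\circ$, not for arbitrary norm-$1$ elements. So nothing forces the relation on $(x,y,\lambda)$.

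The paper's argument avoids this by exploiting that $U^\circ$ is \emph{normal} in $U$. Set $t=(x\bmod v)^{-1}(y\bmod w)\in\GL_2(\FF)$ and write $[t]$ for its class mod $\pm 1$. For any $(u,v)\in U^\circ$ one has $(xux^{-1},yvy^{-1})\in U^\circ$ as well, and combining the two $\PSL_2$-constraints gives the identity
\[
[u^{-1}tu]=[u^{-1}x^{-1}yu]=[x^{-1}][(xux^{-1})^{-1}(yvy^{-1})][y][v^{-1}u]=[x^{-1}y]=[t].
\]
Since $u$ runs over all of $\SL_2(\FF)$, this forces $[t]$ to be central, i.e.\ $t$ is scalar; comparing determinants then gives $t^2=\lambda^{k_f-k_g}$. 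This conjugation trick is the missing idea.
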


   \begin{proof}
    This is visibly a generalization of Proposition 7.2.8 of \cite{LLZ14}, and we follow essentially the same argument. (We have changed notation from $H$ to $U$ to avoid confusion with the Galois group $H$ above.)

    Let $U^\circ = U \cap G^\circ(\Zp)$. By the same commutator argument as before, $U^\circ$ is a subgroup of $G^\circ(\Zp) = G_f^\circ(\Zp) \times G_g^\circ(\Zp)$ which surjects onto either factor.

    By Goursat's Lemma, there are closed normal subgroups $N_f \triangleleft G_f^\circ(\Zp)$ and $N_g \triangleleft G_g^\circ(\Zp)$ such that $U^\circ$ is the graph of an isomorphism $\phi: G_f^\circ(\Zp) / N_f \cong G_g^\circ(\Zp)/N_g$.

    The maximal normal closed subgroups of $G_f^\circ(\Zp)$ are precisely the kernels of the quotient maps to $\PSL_2(k_v)$ for each prime $v \mid p$ of $F_f$, and every automorphism of $\PSL_2(k_v)$ is the composite of a field automorphism of $k_v$ and conjugation by an element of $\PGL_2(k_v)$. Hence, after possibly replacing $U$ by a conjugate of $U$ in $G(\Zp)$, we may find primes $v \mid p$ of $F_f$ and $w \mid p$ of $F_g$, and an isomorphism $\cO_{F_f, v} \cong \cO_{F_g, w}$, such that $U^\circ$ is contained in a conjugate of the group
    \[ \{ (x, y) \in G_f^\circ(\Zp) \times G_g^\circ(\Zp) : x \bmod v = \pm y \bmod w \}.\]

    For a general element $(x, y, \lambda) \in U_p$, let $t = (x \bmod v)^{-1} (y \bmod w) \in \GL_2(\FF)$, and let $[t]$ denote its image in $\GL_2(\FF) / \{\pm 1\}$. For any element $(u, v) \in U^\circ$, we have the same commutator identity as in \cite[Proposition 7.2.8]{LLZ14},
    \[ [u^{-1} t u] = [u^{-1} x^{-1} y u] = [x^{-1}][(xux^{-1})^{-1}(y vy^{-1})][y][v^{-1} u] = [x^{-1} y] = [t],\]
    since $(x u x^{-1}, y v y^{-1}) \in U^\circ$. This shows that $[t]$ commutes with every element of $\PSL_2(\FF)$, so that $t$ is a scalar matrix. It is clear that we must have $t^2 = \lambda^{k_f - k_g}$ by comparing determinants, and this gives the result.
   \end{proof}

   \begin{theorem}
    \label{thm:goodprimes}
    If $f$ is not Galois-conjugate to a twist of $g$, then for all but finitely many primes $p$ we have $\rho_{f, g, p}(H) = G(\Zp)$.
   \end{theorem}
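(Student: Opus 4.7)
The plan is to argue by contradiction: suppose $\rho_{f,g,p}(H) \ne G(\Zp)$ for infinitely many primes $p$, and use Proposition \ref{prop:bigimage2} to extract a near-equality of Hecke eigenvalues that contradicts Lemma \ref{lemma:ramakrishnan}. By the Momose--Ribet--Ghate--Gonz\'alez-Jim\'enez--Quer theorem, for all but finitely many $p$ both $\rho_{f,p}(H) = G_f(\Zp)$ and $\rho_{g,p}(H) = G_g(\Zp)$ hold, with $p$ unramified in $B_f \times B_g$. On the infinite subset of such $p$ where $U_p := \rho_{f,g,p}(H) \ne G(\Zp)$, Proposition \ref{prop:bigimage2} produces primes $v_p \mid p$ of $F_f$ and $w_p \mid p$ of $F_g$, a $\QQ_p$-algebra isomorphism $\sigma_p \colon \cO_{F_f, v_p} \xrightarrow{\sim} \cO_{F_g, w_p}$, and a relation $y \equiv \pm \lambda^{(k_f - k_g)/2}\sigma_p(x) \pmod{w_p}$ holding for every $(x, y, \lambda) \in U_p$ (up to an inner conjugation of $U_p$ in $G(\Zp)$, which preserves traces and determinants).

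I would then evaluate this relation on Frobenius elements. For $\ell$ coprime to $p N_f N_g$ with $\sigma_\ell^{-1} \in H$, the cyclotomic coordinate is $\lambda = \ell^{-1}$, the traces are $a_\ell(f)$ and $a_\ell(g)$, and the determinants of $\rho_{f,p}(\sigma_\ell^{-1})$ and $\rho_{g,p}(\sigma_\ell^{-1})$ on $H$ equal $\ell^{k_f-1}$ and $\ell^{k_g-1}$ respectively, since $\varepsilon_f$ and $\varepsilon_g$ are trivial on $H$. Squaring the matrix relation eliminates both the sign ambiguity and the half-integer exponent; taking traces and applying $\operatorname{tr}(X^2) = \operatorname{tr}(X)^2 - 2\det(X)$ causes the inhomogeneous $-2\ell^{k_g-1}$ terms to cancel on both sides, leaving
\[ a_\ell(g)^2 \equiv \ell^{k_g - k_f}\, \tilde\sigma_p\bigl(a_\ell(f)\bigr)^{2} \pmod{\mathfrak p_p}, \]
for a prime $\mathfrak p_p$ of $\overline\ZZ$ above $p$, where $\tilde\sigma_p \colon F_f \hookrightarrow \overline\QQ$ is any $\QQ$-algebra embedding that lifts $\sigma_p$ once an embedding $\overline\QQ \hookrightarrow \overline\QQ_p$ has been fixed.

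Since there are only finitely many embeddings $F_f \hookrightarrow \overline\QQ$, pigeonhole extracts an infinite subset of $p$'s sharing a single lift $\tilde\sigma$. For each individual admissible $\ell$, the displayed congruence then holds modulo infinitely many primes of $\overline\ZZ$, hence as an identity of algebraic numbers. Using $\varepsilon_f(\ell) = \varepsilon_g(\ell) = 1$ on these $\ell$, the identity rearranges to
\[ \frac{a_\ell(g)^2}{\ell^{k_g - 1} \varepsilon_g(\ell)} = \frac{a_\ell(f^{\tilde\sigma})^2}{\ell^{k_f - 1} \varepsilon_{f^{\tilde\sigma}}(\ell)}, \]
and by Chebotarev applied to the open subgroup $H$, it holds on a set of $\ell$ of positive Dirichlet density. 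Lemma \ref{lemma:ramakrishnan} then supplies a Dirichlet character $\chi$ with $g = f^{\tilde\sigma} \otimes \chi$, contradicting the hypothesis that $f$ is not Galois-conjugate to a twist of $g$. The main subtlety is the sign/square-root bookkeeping around $\lambda^{(k_f-k_g)/2}$, which squaring the matrix identity \emph{before} taking the trace resolves cleanly; the remaining work is pigeonholing the local data $\sigma_p$ to a single global embedding and then promoting each mod-$p$ congruence to equality via infinitely many primes.
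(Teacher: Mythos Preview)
Your proof is correct and follows the same strategy as the paper: use Proposition~\ref{prop:bigimage2} to obtain a mod-$p$ relation between $a_\ell(f)^2$ and a conjugate of $a_\ell(g)^2$, promote congruences holding modulo infinitely many primes to an equality of algebraic numbers, and then invoke Lemma~\ref{lemma:ramakrishnan}. The only difference is bookkeeping: the paper forms the product $\prod_\gamma\bigl(a_\ell(f)^2 - \ell^{k_f-k_g}\gamma(a_\ell(g))^2\bigr)$ and pigeonholes over $\ell$ to fix a single $\gamma$, whereas you pigeonhole over the bad primes $p$ first to fix the embedding $\tilde\sigma$ and then obtain the identity directly for every $\ell$ with Frobenius in $H$.
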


   \begin{proof}
    Let us fix embeddings of $F_f$ and $F_g$ into $\CC$, and let $F$ be their composite.

    The above theorem shows that for all $p$ outside some finite set $S$, if $\rho_{f, g, p}(H) \ne G(\Zp)$, then there is some prime $v$ of $F$ above $p$ dividing the product
    \[ \prod_{\gamma \in \Gal(F_g / \QQ)} \left( a_\ell(f)^2 - \ell^{k_f-k_g} \gamma(a_\ell(g))^2\right) \]
    for all primes $\ell$ whose Frobenius elements lie in $H$. Since no nonzero element of $F$ may be divisible by infinitely many primes, we deduce that either $\rho_{f, g, p}(H) = G(\Zp)$ for all but finitely many $p$, or the above product is zero, so for each prime $\ell$ whose Frobenius lies in $H$, there is $\gamma \in \Gal(F_g / \QQ)$ (possibly depending on $\ell$) such that we have
    \[ \frac{a_\ell(f)^2}{\ell^{k-1} \varepsilon_f(\ell)} = \gamma\left(\frac{a_\ell(g)^2}{\ell^{k-1} \varepsilon_g(\ell)}\right) \]
    (since $\varepsilon_f(\ell) = \varepsilon_g(\ell) = 1$ for all such $\ell$). Since there are only finitely many possible $\gamma$, there must be at least one $\gamma \in \Gal(F_g / \QQ)$ such that the above equality holds for a set of $\ell$ of positive upper density. By Lemma \ref{lemma:ramakrishnan}, this implies that for some (and hence any) $\gamma' \in \Gal(L_g / \QQ)$ lifting $\gamma$, the conjugate form $g^\gamma$ is a twist of $f$.
   \end{proof}

  \subsection{Open image for all $p$}

   \begin{proposition}
    Let $p$ be arbitrary and let $U$ be a subgroup of $G(\Zp)$ which has open image in $G_f(\Zp)$ and $G_g(\Zp)$. Then either $U$ is open in $G(\Zp)$, or there are primes $v$ of $F_f$ and $w$ of $F_g$ above $p$, a field isomorphism $F_{f, v} \cong F_{g, w}$, and an isomorphism $B_f \otimes F_{f, v} \cong B_g \otimes F_{g, w}$, such that $U$ has a finite-index subgroup contained in a conjugate of the subgroup
    \[ \{ (x, y, \lambda) \in G(\Zp): y_w = \lambda^{(k_f-k_g)/2} x_v\} \]
    where $x_v$ and $y_w$ are the projections of $x$ and $y$ to the direct summands $(B_f \otimes F_{f, v})^\times$ and $(B_g \otimes F_{g, w})^\times$.
   \end{proposition}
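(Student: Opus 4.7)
The plan is to run the argument of Proposition \ref{prop:bigimage2} at the level of $p$-adic Lie algebras in place of mod-$p$ reductions. After replacing $U$ by an open subgroup we may assume it is a $p$-adic Lie subgroup of $G(\Zp)$; let $\mathfrak{u} \subseteq \operatorname{Lie} G \otimes \Qp$ be its Lie algebra, and $\mathfrak{u}^\circ = \mathfrak{u} \cap (\operatorname{Lie} G_f^\circ \oplus \operatorname{Lie} G_g^\circ) \otimes \Qp$. The same commutator/derived-subalgebra computation used in Theorem \ref{thm:mainthm} (based on the fact that $\mathfrak{sl}_1(B_{f,p})$ and $\mathfrak{sl}_1(B_{g,p})$ are semisimple) shows that $\mathfrak{u}^\circ$ projects surjectively onto each of $\mathfrak{sl}_1(B_f \otimes \Qp)$ and $\mathfrak{sl}_1(B_g \otimes \Qp)$.

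Next I decompose $\mathfrak{sl}_1(B_f \otimes \Qp) = \bigoplus_{v \mid p} \mathfrak{sl}_1(B_f \otimes F_{f,v})$ and similarly for $g$; each summand is a simple $\Qp$-Lie algebra, as noted in the proof of Theorem \ref{thm:mainthm}. Applying Goursat's lemma to the semisimple Lie algebra $\mathfrak{u}^\circ$ (which surjects onto both factors) yields a bijection between certain simple constituents of $\mathfrak{sl}_1(B_f \otimes \Qp)$ and of $\mathfrak{sl}_1(B_g \otimes \Qp)$, matched up to $\Qp$-linear Lie algebra isomorphisms. If $\mathfrak{u}^\circ$ is proper, at least one such matching exists. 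By Lemma \ref{lemma:liealgs}, any such isomorphism arises from a field isomorphism $F_{f,v} \cong F_{g,w}$ together with an isomorphism $B_f \otimes F_{f,v} \cong B_g \otimes F_{g,w}$, and is furthermore inner. After conjugating $U$ in $G(\Zp)$ to absorb this inner automorphism, I get that an open subgroup of $U^\circ := U \cap G^\circ(\Zp)$ is contained in $\{(x,y) \in G_f^\circ \times G_g^\circ : y_w = x_v\}$.

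To extend from $U^\circ$ to $U$, I pick a general $(x,y,\lambda) \in U$ and set $t = y_w x_v^{-1} \in (B_g \otimes F_{g,w})^\times$. Repeating verbatim the commutator identity $[u^{-1} t u] = [t]$ from the proof of Proposition \ref{prop:bigimage2} with any $(u,u',1) \in U^\circ$ projecting to an element of the diagonally embedded open subgroup, I conclude that $t$ is central, i.e.\ a scalar in $F_{g,w}$. Comparing reduced norms using $\norm(x_v) = \lambda^{1-k_f}$ and $\norm(y_w) = \lambda^{1-k_g}$ forces $t^2 = \lambda^{k_f-k_g}$, so $t = \pm \lambda^{(k_f-k_g)/2}$ for some choice of square root. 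Because the sign ambiguity cuts out only a subgroup of index at most $2$, this places a finite-index subgroup of $U$ inside the stated group. The main technical obstacle is Step 2, namely verifying cleanly that $\mathfrak{u}^\circ$ projects onto each $\mathfrak{sl}_1(B_{\bullet} \otimes \Qp)$; everything after that is bookkeeping once Lemma \ref{lemma:liealgs} is in hand.
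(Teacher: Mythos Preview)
Your proof is correct and follows exactly the approach sketched in the paper: pass to Lie algebras, use semisimplicity of $\mathfrak{sl}_1(B_{\bullet})$ to see that $\mathfrak{u}^\circ$ surjects onto both factors, apply Goursat's lemma for Lie algebras, and invoke Lemma~\ref{lemma:liealgs} to identify the resulting isomorphism of simple factors. The paper's own proof is only four sentences and leaves the extension from $U^\circ$ to $U$ (your Step~5, with the commutator/centrality argument and the determinant comparison $t^2 = \lambda^{k_f-k_g}$) entirely implicit; you have filled this in correctly.
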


   \begin{proof}
    This follows in a very similar way to Proposition \ref{prop:bigimage2} with all the groups concerned replaced by their Lie algebras. We know that $\mathfrak{u} = \operatorname{Lie}(U)$ is a subalgebra of $\operatorname{Lie}(G)$ which surjects onto $\operatorname{Lie}(G_f)$ and $\operatorname{Lie}(G_g)$. Since $G^\circ_f$ and $G^\circ_g$ are semi-simple we deduce that $\mathfrak{u}^\circ = \operatorname{Lie}(U^\circ)$ is a subgroup of $\operatorname{Lie}(G_f^\circ) \oplus \operatorname{Lie}(G_g^\circ)$ surjecting onto either factor. By Goursat's Lemma for Lie algebras, we deduce that it must be contained in the graph of an isomorphism between simple factors of $\operatorname{Lie}(G_f^\circ)$ and $\operatorname{Lie}(G_g^\circ)$. Using Lemma \ref{lemma:liealgs}, we deduce the above result.
   \end{proof}

   \begin{proposition}
    If $f$ is not Galois-conjugate to a twist of $g$, then $\rho_{f, g, p}(H)$ is open in $G(\Zp)$ for all primes $p$.
   \end{proposition}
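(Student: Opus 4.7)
The plan is to argue by contradiction: assume $\rho_{f,g,p}(H)$ is not open in $G(\Zp)$. By Theorem \ref{thm:bigimageGL2} applied to $f$ and $g$ separately, the projections of this image to $G_f(\Zp)$ and $G_g(\Zp)$ are already open, so the hypothesis of the preceding proposition is satisfied. That proposition then produces primes $v \mid p$ of $F_f$, $w \mid p$ of $F_g$, an isomorphism $F_{f,v} \cong F_{g,w}$ together with a compatible identification of the local quaternion algebras, and a finite-index subgroup $U' \subseteq \rho_{f,g,p}(H)$ which (after conjugation) lies inside $\{(x,y,\lambda) : y_w = \pm \lambda^{(k_f-k_g)/2} x_v\}$.

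Next I would convert the diagonal constraint on $U'$ into a relation between Hecke eigenvalues. Let $H' \subseteq H$ be the open subgroup corresponding to $U'$. For a prime $\ell \nmid Np$ whose arithmetic Frobenius $\sigma_\ell$ lies in $H'$, I would evaluate the diagonal relation on $\tilde\rho_{f,g,p}(\sigma_\ell)$ and take reduced traces: using $\chi_{\mathrm{cyc}}(\sigma_\ell) = \ell$ and the convention $\operatorname{Tr} \rho_{f,p}(\sigma_\ell) = \ell^{1-k_f} a_\ell(f)$ (and similarly for $g$), this gives $a_\ell(g)_w = \pm \ell^{(k_g-k_f)/2} a_\ell(f)_v$. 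Squaring eliminates the sign ambiguity and rearranges to
\[ \frac{a_\ell(g)^2}{\ell^{k_g-1}}\bigg|_w = \frac{a_\ell(f)^2}{\ell^{k_f-1}}\bigg|_v, \]
an identity in the common local field $F_{f,v} \cong F_{g,w}$.

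The final step is to globalize this relation and invoke Lemma \ref{lemma:ramakrishnan}. By Chebotarev, the set of $\ell$ with $\sigma_\ell \in H'$ has positive density and hence positive upper density. Choosing embeddings $L_f, L_g \into \CC$ that extend $F_f \into F_{f,v}$ and $F_g \into F_{g,w}$ via a common isomorphism $\overline{\Qp} \cong \CC$, and recalling that $\varepsilon_f(\ell) = \varepsilon_g(\ell) = 1$ whenever $\sigma_\ell \in H$, the displayed identity becomes exactly the hypothesis of Lemma \ref{lemma:ramakrishnan}. That lemma then supplies a Dirichlet character $\chi$ and an equality of the form $g^\sigma = f^\tau \otimes \chi$ for appropriate Galois conjugates, contradicting the assumption that $f$ is not Galois-conjugate to a twist of $g$.

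The main obstacle, beyond routine bookkeeping, is keeping track of the various normalizations--the $\pm$ arising from the $\PSL_2$-level of the Goursat isomorphism, the exponent $(k_f-k_g)/2$ built into the diagonal subgroup, and the powers of $\ell$ coming from $\chi_{\mathrm{cyc}}$ and from the convention $\operatorname{Tr} \rho_{f,p}(\sigma_\ell^{-1}) = a_\ell(f)$--so that after squaring one lands precisely in the Petersson-normalized form that Ramakrishnan's theorem requires. Once this arithmetic is done cleanly, the rest is a purely formal combination of the preceding proposition, Chebotarev density, and Lemma \ref{lemma:ramakrishnan}.
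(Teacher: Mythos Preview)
Your argument is correct and follows essentially the same route as the paper: assume non-openness, apply the preceding Lie-algebra proposition to obtain the diagonal constraint at primes $v$ and $w$, read off the relation between $a_\ell(f)^2/\ell^{k_f-1}$ and $a_\ell(g)^2/\ell^{k_g-1}$ on a positive-density set of Frobenii in $H$, and then invoke Lemma \ref{lemma:ramakrishnan}. The paper's own proof is a two-line compression of exactly this, so your expansion of the trace bookkeeping and the Chebotarev step is a faithful (and more explicit) rendering of the intended argument.
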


   \begin{proof}
    By the previous result, if $\rho_{f, g, p}(H)$ is not open in $G(\Zp)$, there is an element $\gamma \in \Gal(F_f / \QQ)$ and a positive-density set of primes $\ell$ such that we have
    \[ \frac{a_\ell(f)^2}{\ell^{k-1} \varepsilon_f(\ell)} = \gamma\left(\frac{a_\ell(g)^2}{\ell^{k-1} \varepsilon_g(\ell)}\right). \]
    Ramakrishnan's theorem now tells us that $g^\gamma$ is a twist of $f$.
   \end{proof}

  \subsection{Adelic big image}

   \begin{theorem}
    Let $f$, $g$ be non-CM-type cusp forms of weights $k_f, k_g \ge 2$. Then either $\rho_{f, g}(H)$ is open in $G(\hat\QQ)$, or $k_f = k_g$ and $f$ is Galois-conjguate to a twist of $g$.
   \end{theorem}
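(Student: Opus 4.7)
The plan is to deduce the theorem by applying Theorem \ref{thm:mainthm} directly to the compact closed subgroup $U = \tilde\rho_{f,g}(H)$ of $G(\hat\QQ)$. I would argue by contrapositive: assume $f$ is not Galois-conjugate to any twist of $g$, and verify the three hypotheses of Theorem \ref{thm:mainthm} for $U$. Hypothesis (1), that the projection of $U$ to $G(\Qp)$ is open for every prime $p$, is exactly the content of the preceding ``Open image for all $p$'' proposition. Hypothesis (2), that this projection equals $G(\Zp)$ for all but finitely many $p$, is Theorem \ref{thm:goodprimes}. For hypothesis (3), the similitude character $G \to \mathbf{G}_m$ composed with $\tilde\rho_{f,g}$ is (by construction of $\tilde\rho_{f,g}$) the restriction to $H$ of the adelic cyclotomic character $G_\QQ \to \hat\ZZ^\times$; since $H$ is of finite index in $G_\QQ$ and the cyclotomic character is surjective, the image of $U$ in $\hat\QQ^\times$ is open. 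Theorem \ref{thm:mainthm} then yields that $U$ is open in $G(\hat\QQ)$, which is the first alternative of the theorem.

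It remains to verify that the exceptional case automatically carries the weight equality $k_f = k_g$. If $f$ is Galois-conjugate to a twist of $g$, meaning $f = g^\sigma \otimes \chi$ for some embedding $\sigma$ of $L_g$ into $\CC$ and some Dirichlet character $\chi$, then $f$, $g^\sigma$, and $g$ all share the same weight, because both twisting by a Dirichlet character and Galois conjugation of newforms preserve the weight. Hence $k_f = k_g$, and the two clauses of the exceptional alternative occur together.

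I do not expect any genuinely new obstacle at this stage: the substantive input has already been organized in the two preceding subsections, namely the local Goursat's Lemma analysis (which, via Lemma \ref{lemma:ramakrishnan}, identifies precisely when the local image can fail to be open or full), and the adelic patching criterion Theorem \ref{thm:mainthm}. The only point of technique worth emphasizing is the verification of hypothesis (3) above, which is why the auxiliary similitude coordinate $\lambda$ was built into $G$ in Definition \ref{def:G}; without it, the adelic image could fail to be open even when all local images are open, exactly as in the remark following Theorem \ref{thm:mainthm}.
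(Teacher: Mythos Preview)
Your proof is correct and follows exactly the same route as the paper: argue by contrapositive and apply Theorem \ref{thm:mainthm} to $\tilde\rho_{f,g}(H)$, invoking Theorem \ref{thm:goodprimes} and the ``Open image for all $p$'' proposition for the first two hypotheses. You are in fact slightly more thorough than the paper, which leaves both the verification of hypothesis (3) (openness of the image in $\hat\QQ^\times$) and the weight equality $k_f = k_g$ implicit.
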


   \begin{proof}
    Suppose $f$ is not Galois-conjguate to a twist of $g$. Then, by the results of the previous two sections, $\rho_{f, g}(H)$ is a compact subgroup of $G(\hat\QQ)$ whose image is open in $G(\Qp)$ for all primes $p$, and equal to $G(\Zp)$ for all but finitely many $p$. Applying Theorem \ref{thm:mainthm}, we deduce that this subgroup must be open in $G(\Zp)$.
   \end{proof}

   Via exactly the same methods and induction on $n$, one can prove the following generalization. We shall not give the proof here, as the notation becomes somewhat cumbersome, but the arguments are exactly as before:

   \begin{theorem}
    Let $f_1, \dots, f_n$ be newforms of weights $k_1, \dots, k_n \ge 2$. Then either
    \begin{itemize}
     \item there is a Dirichlet character $\chi$ and $i, j \in \{1, \dots, n\}$ such that $f_i \otimes \chi$ is Galois-conjugate to $f_j$, with $\chi \ne 1$ if $i = j$;
     \item or there is an open subgroup $H$ of $G_{\QQ}$ such that the image of $H$ under the map
     \[ \rho_{f_1} \times \dots \times \rho_{f_n} \times \chi: G_{\QQ} \to \GL_2(L_{f_1} \otimes \hat\QQ) \times \dots \times \GL_2(L_{f_1} \otimes \hat\QQ) \times \hat\QQ^\times\]
     is an open subgroup of $G(\hat\QQ)$, where $G$ is the algebraic group
     \[ \left\{ (g_1, \dots, g_n, \lambda) \in B_{f_1}^\times \times \dots \times B_{f_n}^\times \times \mathbf{G}_m: \norm(g_i) = \lambda^{1-k_i}\right\}.\]
    \end{itemize}
   \end{theorem}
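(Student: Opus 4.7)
The plan is an induction on $n$, with Theorem~\ref{thm:bigimageGL2} ($n = 1$) and the preceding theorem ($n = 2$) as the base cases. First I dispose of CM forms: if some $f_i$ has CM by an imaginary quadratic field $K$, then $f_i \otimes \chi_K = f_i$ for the quadratic character $\chi_K \ne 1$ attached to $K$, so the first alternative holds with $i = j$. I may therefore assume every $f_i$ is non-CM, in which case the formalism of Section~\ref{sect:largeimage} produces fields $F_{f_i}$, quaternion algebras $B_{f_i}$, algebraic groups $G_{f_i}$, and a joint adelic representation $\tilde\rho : H \to G(\hat\QQ)$ on $H = \bigcap_i H_{f_i}$, where $G$ is the fibre product of the $G_{f_i}$ over $\mathbf{G}_m$.

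Assume henceforth that the first alternative fails, so that no pair $(f_i, f_j)$ with $i \ne j$ admits a twist-conjugacy relation. The essential combinatorial input is a multi-factor Goursat lemma: if $S_1, \dots, S_m$ are nonabelian finite simple groups (resp.\ nontrivial simple Lie algebras) that are pairwise non-isomorphic, and $\Sigma \subseteq \prod_\alpha S_\alpha$ is a subgroup (resp.\ subalgebra) whose projection to every pair $S_\alpha \times S_\beta$ is surjective, then $\Sigma = \prod_\alpha S_\alpha$. This is proved by induction on $m$: the kernel $K$ of the projection $\Sigma \to \prod_{\alpha < m} S_\alpha$ is normal in $S_m$, hence trivial or all of $S_m$; the first case, combined with the inductive hypothesis, forces $S_m$ to be isomorphic to one of the $S_\alpha$ ($\alpha < m$), a contradiction.

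I apply this at each prime $p$. For almost all $p$ one has $G^\circ(\Zp) = \prod_{i, v \mid p} \SL_2(\cO_{F_{f_i}, v})$, whose semisimple residual quotient is $\prod_{i,v} \PSL_2(k_{i,v})$. The image of $\tilde\rho(H) \cap G^\circ(\hat\QQ)$ in this quotient has surjective projection onto each pair of factors: within a single $i$ by the $n = 1$ theorem, and for $i \ne j$ by Theorem~\ref{thm:goodprimes} applied pairwise. The no-twist hypothesis combined with Proposition~\ref{prop:bigimage2} rules out any identification of two factors $\PSL_2(k_{i,v}) \cong \PSL_2(k_{j,w})$ with $i \ne j$ being realized inside this image. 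The multi-factor Goursat lemma then gives surjectivity onto the full product of $\PSL_2$'s, and Lemma~\ref{lemma:lifting} lifts this to $G^\circ(\Zp)$; combined with the cyclotomic projection, this yields $\tilde\rho(H) \supseteq G(\Zp)$ for almost all $p$.

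The analogous argument at the Lie-algebra level handles every prime $p$: $\operatorname{Lie}(G^\circ)_{\Qp} = \bigoplus_{i,v} \mathfrak{sl}_1(B_{f_i} \otimes F_{f_i, v})$ is a sum of simple Lie algebras, Lemma~\ref{lemma:liealgs} characterizes when two summands can be isomorphic, and the no-twist assumption excludes all cross-form identifications exactly as above. Thus $\tilde\rho(H)$ is open in $G(\Qp)$ for every $p$, and the cyclotomic projection gives open image in $\hat\QQ^\times$, so Theorem~\ref{thm:mainthm} concludes that $\tilde\rho(H)$ is open in $G(\hat\QQ)$. The main obstacle is the multi-factor Goursat bookkeeping: one must verify that any cross-form identification of simple factors appearing in the image really does translate, via the mechanism of Proposition~\ref{prop:bigimage2}, into the twist-conjugacy relation that we have excluded, thereby reducing the proof at each prime to the known $n = 2$ analysis.
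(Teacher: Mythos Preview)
Your approach is correct and matches what the paper intends: it explicitly declines to give a proof, saying only that ``the arguments are exactly as before'' via induction on $n$, and your argument is a faithful expansion of that sketch, reducing to the pairwise results (Theorem~\ref{thm:goodprimes} and its Lie-algebra analogue) and then invoking Theorem~\ref{thm:mainthm}.

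One correction is needed in your multi-factor Goursat lemma. As you state it, the simple factors $S_1,\dots,S_m$ are required to be pairwise non-isomorphic, and your proof uses this to derive the contradiction. But in the application the factors $\PSL_2(k_{i,v})$ are \emph{not} pairwise non-isomorphic in general (e.g.\ if $F_{f_1}=F_{f_2}=\QQ$ they are all $\PSL_2(\FF_p)$), so the lemma as written does not apply, and your sentence about identifications ``being realised inside this image'' does not repair this. The fix is to drop the non-isomorphism hypothesis entirely: if $\Sigma\subseteq\prod_\alpha S_\alpha$ surjects onto every pair $S_\alpha\times S_\beta$, then $\Sigma=\prod_\alpha S_\alpha$. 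The inductive step goes as you wrote up to the point where the kernel $K\subseteq S_m$ is trivial; then $\Sigma$ is the graph of a surjection $\phi\colon\prod_{\alpha<m}S_\alpha\to S_m$, whose kernel (being normal in a product of nonabelian simple groups) is $\prod_{\alpha\ne\beta}S_\alpha$ for some $\beta<m$, so $\phi$ factors through an isomorphism $S_\beta\cong S_m$. But then the projection of $\Sigma$ to $S_\beta\times S_m$ is a graph, contradicting pairwise surjectivity. The same remark applies verbatim to the Lie-algebra version. With this correction your argument goes through without change.
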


   \begin{remark}
    Note that Serre \cite{serre91} has formulated a general conjecture on the image of Galois representations for motives: for any motive $M$ of rank $r$ over a number field $K$, one can define a connected subgroup $MT(M)$ of $\GL_r / \QQ$ such that the image of $\rho_{M}: G_{K} \to \GL_r(\hat\QQ)$ is contained in $MT(M)(\hat\QQ)$. Thus a finite-index subgroup $H$ of $G_{\QQ}$ lands in $MT^0(M)(\hat\QQ)$, where $MT^0(M)$ is the identity component.

    In general one does not expect $\rho_{M}(H)$ to be open in $MT^0(M)(\hat\QQ)$, because of obstructions arising from isogenies; e.g. if $M = \QQ(2)$, then $MT(M) = \mathbf{G}_m$, but the image of $G_{\QQ}$ is the group of squares in $\hat\ZZ^\times$, which is not open. However, there is a distinguished class of ``maximal'' motives for which this should be the case.

    The motive $M(f)$ attached to a weight $k$ modular form is not maximal if $k > 2$, but $M(f) \oplus \QQ(1)$ is maximal if $f$ is not of CM type (cf.~\S 11.10 of op.cit.), and the group $G_f$ is the connected component of $MT(M(f) \oplus \QQ(1))$. Thus we have verified Serre's open image conjecture for the maximal motives
    \[ M(f_1) \oplus \dots \oplus M(f_n) \oplus \QQ(1)\]
    whenever the $f_i$ are non-CM forms of weight $\ge 2$ and no $f_i$ is Galois-conjugate to a twist of $f_j$.
   \end{remark}

  \section{Special elements in the images}
   \label{sect:specialelt}
   
   \subsection{Setup}

    This section is more technical, and was the original motivation for the present work: to find elements in the images of $\rho_{f, p} \times \rho_{g, p}$ with certain special properties. In this section we fix newforms $f, g$ as before, and a Galois extension $L / \QQ$ with embeddings $L_f, L_g \into L$; we then have representations $\rho_{f, \frp}, \rho_{g, \frp}: G_{\QQ} \to \GL_2(\cO_{L, \frp})$ for each prime $\frp$ of $L$.

    Let $V_\frp$ be the four-dimensional $L_{\frp}$-vector-space $L_{\frp}^{\oplus 4}$, with $G_{\QQ}$ acting via the tensor-product Galois representation $\rho_{f, \frp} \otimes \rho_{g, \frp}$; and let $T_\frp$ be the $G_{\QQ}$-stable $\cO_{L, \frp}$-lattice $\cO_{L, \frp}^{\oplus 4}$ in $V_{\frp}$.

    Our aim is to verify the following conditions, in as many cases as possible:

    \begin{hypothesis}[$\operatorname{Hyp}(\QQ(\mu_{p^\infty}), V_{\frp})$]\mbox{~}
     \begin{enumerate}
      \item $V_{\frp}$ is an irreducible $L_{\frp}\left[G_{\QQ(\mu_{p^\infty})}\right]$-module (where $p$ is the rational prime below $\frp$).
      \item There is an element $\tau \in G_{\QQ(\mu_{p^\infty})}$ such that $V_{\frp} / (\tau - 1)V_{\frp}$ has dimension 1 over $L_{\frp}$.
     \end{enumerate}
    \end{hypothesis}

    \begin{hypothesis}[$\operatorname{Hyp}(\QQ(\mu_{p^\infty}), T_{\frp})$]\mbox{~}
     \begin{enumerate}
      \item $T_{\frp} \otimes k_{\frp}$ is an irreducible $k_{\frp}\left[G_{\QQ(\mu_{p^\infty})}\right]$-module, where $k_{\frp}$ is the residue field of $L_{\frp}$.
      \item There is an element $\tau \in G_{\QQ(\mu_{p^\infty})}$ such that $T_{\frp} / (\tau - 1) T_{\frp}$ is free of rank 1 over $\cO_{L, \frp}$.
     \end{enumerate}
    \end{hypothesis}

    Our formulation of these is exactly that of \cite[Chapter 2]{rubin00}. Note that $\operatorname{Hyp}(\QQ(\mu_{p^\infty}), T_{\frp}) \Rightarrow \operatorname{Hyp}(\QQ(\mu_{p^\infty}), V_{\frp})$. We note the following preliminary negative result:

    \begin{proposition}
     If $\varepsilon_f \varepsilon_g$ is the trivial character, then $\operatorname{Hyp}(\QQ(\mu_{p^\infty}), V_{\frp})$ is false (for every prime $\frp$).
    \end{proposition}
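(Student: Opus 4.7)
The plan is to show that condition (2) of $\operatorname{Hyp}(\QQ(\mu_{p^\infty}), V_\frp)$ fails for every $\tau \in G_{\QQ(\mu_{p^\infty})}$. Note that $\dim_{L_\frp} V_\frp/(\tau - 1)V_\frp = \dim_{L_\frp}\ker(\tau - 1)$, i.e.\ the $L_\frp$-dimension of the $1$-eigenspace of $\tau$ acting on $V_\frp$, so I aim to show this dimension can never equal $1$.

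The key arithmetic input is that the $p$-adic cyclotomic character is trivial on $G_{\QQ(\mu_{p^\infty})}$, so for any $\tau$ in this group
\[ \det \rho_{f, \frp}(\tau) = \varepsilon_f(\tau), \qquad \det \rho_{g, \frp}(\tau) = \varepsilon_g(\tau), \]
and the hypothesis $\varepsilon_f \varepsilon_g = 1$ gives $\det\rho_{f,\frp}(\tau)\det\rho_{g,\frp}(\tau) = 1$. Passing to an algebraic closure $\overline{L_\frp}$ (permissible, since the dimension of the $1$-eigenspace is stable under base change), I would fix a square root $\lambda$ of $\det\rho_{f,\frp}(\tau)$ and write
\[ \rho_{f,\frp}(\tau) = \lambda\, \tilde A, \qquad \rho_{g,\frp}(\tau) = \epsilon \lambda^{-1}\, \tilde B \]
with $\tilde A, \tilde B \in \SL_2(\overline{L_\frp})$ and some $\epsilon \in \{\pm 1\}$, so that $\tau$ acts on $V_\frp \otimes_{L_\frp} \overline{L_\frp}$ as $\epsilon\, \tilde A \otimes \tilde B$.

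The proposition then reduces to the following elementary matrix lemma: for any algebraically closed field $K$, any $A, B \in \SL_2(K)$, and any $\epsilon \in \{\pm 1\}$,
\[ \dim_K \ker\bigl(\epsilon \, A \otimes B - I\bigr) \in \{0, 2, 4\}. \]
I would prove this by identifying $K^2 \otimes K^2$ with $M_{2 \times 2}(K)$ via $v \otimes w \mapsto v w^T$, under which $A \otimes B$ acts as $X \mapsto A X B^T$; the $1$-eigenspace then becomes the intertwining space $\{ X : AX = XC\} = \operatorname{Hom}_K(C, A)$ where $C = \epsilon (B^T)^{-1} \in \SL_2(K)$. A case analysis on the Jordan types of $A$ and $C$ — each being either a scalar (necessarily $\pm I$), semisimple with distinct eigenvalues $\alpha, \alpha^{-1}$ for some $\alpha \ne \pm 1$, or a single non-trivial Jordan block (forcing eigenvalue $\pm 1$) — then yields the claimed trichotomy.

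The main obstacle will be keeping the case analysis in the lemma short. The essential feature that makes it go through is that for elements of $\SL_2$, eigenvalues come in reciprocal pairs, so two such matrices that share any one eigenvalue must share both; this precludes the $1$-dimensional intertwining spaces that can occur for general elements of $\GL_2$ and forces $\dim_K \operatorname{Hom}_K(C, A)$ to lie in $\{0,2,4\}$.
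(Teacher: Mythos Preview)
Your proof is correct and follows essentially the same approach as the paper: the paper observes that the image of $G_{\QQ(\mu_{p^\infty})}$ lands in $\{(x,y)\in\GL_2\times\GL_2:\det(xy)=1\}$ and then invokes an ``easy case-by-case check'' to rule out one-dimensional cokernels, which is exactly what you carry out. Your identification of the $1$-eigenspace of $A\otimes B$ with an intertwining space $\{X:AX=XC\}$ for $A,C\in\SL_2$ is a clean way to execute the case analysis that the paper leaves to the reader (and note that your sign $\epsilon$ is in fact redundant, since it can always be absorbed into $\tilde B$, as $-\tilde B\in\SL_2$).
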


    \begin{proof}
     If $\varepsilon_f \varepsilon_g$ is trivial, the image of $G_{\QQ(\mu_{p^\infty})}$ under $\rho_{f, \frp} \times \rho_{g, \frp}$ is contained in the subgroup $\left\{ (x, y) \in \GL_2(L_\frp) \times \GL_2(L_\frp) : \det(xy) = 1\right\}$. An easy case-by-case check shows that the image of this subgroup under the tensor-product map to $\GL_4(L_\frp)$ contains no element $\tau$ such that $\tau - 1$ has one-dimensional cokernel.
    \end{proof}

   \subsection{Special elements: the higher-weight case}
    \label{sect:higherwt}

    In this section, we assume $f$ and $g$ have weights $\ge 2$, both $f$ and $g$ are non-CM, and $f$ is not Galois-conjugate to any twist of $g$.

    We say $\frp$ is a \emph{good prime} if the prime $p$ of $\QQ$ below $\frp$ is $\ge 5$, $p$ is unramified in the quaternion algebra $B$ over $F_f \oplus F_g$ described above, $p \nmid N_f N_g$, and the conclusion of Theorem \ref{thm:goodprimes} holds for $p$. For any good prime, it is clear that the irreducibility hypothesis (1) in $\operatorname{Hyp}(\QQ(\mu_{p^\infty}), T_{\frp})$ is satisfied.

    For convenience we set $N = \operatorname{LCM}(N_f, N_g)$ if $N_f$ and $N_g$ are both odd, and $N = 4\operatorname{LCM}(N_f, N_g)$ otherwise, so for any inner twist $(\gamma, \chi)$ of either $f$ or $g$, the conductor of $\chi$ divides $N$.

    \begin{proposition}
     \label{prop:existencetau}
     Let $u \in (\ZZ / N \ZZ)^\times$ be such that $\varepsilon_f(u) \varepsilon_g(u) \ne 1$. Let $\frp$ be a good prime, and suppose that $\chi_{\gamma}(u) = 1$ for all $\gamma$ in the decomposition group of $\frp$ in $\Gamma_f$, and similarly for $\Gamma_g$.

     Then $\Hyp(\QQ(\mu_{p^\infty}), V_{\frp})$ holds; and if $p \ge 7$ and $\varepsilon_f \varepsilon_g(u) \ne 1 \bmod p$, then in fact $\Hyp(\QQ(\mu_{p^\infty}), T_{\frp})$ holds.
    \end{proposition}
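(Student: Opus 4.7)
The plan is to construct $\tau$ as a product $\sigma h$ where $\sigma \in G_{\QQ(\mu_{p^\infty})}$ acts as $u$ on $\mu_N$, and $h \in H \cap G_{\QQ(\mu_{p^\infty})}$ is chosen via the joint big image theorem to prescribe the $\SL_2$-parts of the local image at $\frp$. Such $\sigma$ exists since $p \nmid N$ for a good prime, so that $\QQ(\mu_N)$ and $\QQ(\mu_{p^\infty})$ are linearly disjoint. Since $\varepsilon_f, \varepsilon_g$ and all $\chi_\gamma$ have conductor dividing $N$, evaluating at $\sigma$ agrees with evaluating at $u$.

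Applying Corollary~\ref{papier} to $f$ identifies $\rho_{f,p}(\sigma \cdot (H_f \cap G_{\QQ(\mu_{p^\infty})}))$ with the coset
\[ \tbt{\alpha_f}{0}{0}{\varepsilon_f(u)\alpha_f^{-1}} \SL_2(\cO_{F_f}\otimes\Zp), \]
where $\alpha_f \in (\cO_{L_f}\otimes\Zp)^\times$ satisfies $\gamma(\alpha_f)=\chi_\gamma(u)\alpha_f$ for all $\gamma \in \Gamma_f$; the analogous statement holds for $g$. Writing $\mathfrak{q}_f = \frp \cap F_f$ and letting $D_f \subseteq \Gamma_f$ denote the decomposition group at $\frp$, the hypothesis $\chi_\gamma(u)=1$ on $D_f$ forces the $\frp_f$-component of $\alpha_f$ to lie in $L_{f,\frp_f}^{D_f}=F_{f,\mathfrak{q}_f}$; similarly, from $\gamma(\varepsilon_f(u))=\varepsilon_f(u)\chi_\gamma(u)^2$ we deduce $\varepsilon_f(u) \in F_{f,\mathfrak{q}_f}^\times$, and parallel conclusions hold for $g$. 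By Theorem~\ref{thm:goodprimes}, $\rho_{f,g,p}(H)=G(\Zp)$, and intersecting with the kernel of the cyclotomic character yields $\rho_{f,g,p}(H\cap G_{\QQ(\mu_{p^\infty})}) = G^\circ(\Zp) = \SL_2(\cO_{F_f}\otimes\Zp) \times \SL_2(\cO_{F_g}\otimes\Zp)$. Hence the two $\SL_2$-parts can be varied independently in their respective Papier cosets.

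Choose $x \in \Zp^\times$ and realize $\tau$ so that at the $\frp$-component
\[ \rho_{f,\frp_f}(\tau) = \tbt{x}{0}{0}{\varepsilon_f(u)/x}, \qquad \rho_{g,\frp_g}(\tau) = \tbt{x^{-1}}{0}{0}{x\,\varepsilon_g(u)}; \]
both matrices lie in their respective Papier cosets since the translates by the inverse of the diagonal ``$\alpha$-matrix'' are diagonal with unit entries and determinant one, hence integral $\SL_2$. The four eigenvalues of $\tau$ on $V_\frp = V_{f,\frp_f}\otimes V_{g,\frp_g}$ are
\[ 1,\quad x^2\,\varepsilon_g(u),\quad \varepsilon_f(u)/x^2,\quad \varepsilon_f(u)\varepsilon_g(u). \]
The last is non-trivial by hypothesis, and choosing $x$ so that $x^2 \notin \{\varepsilon_f(u),\,1/\varepsilon_g(u)\}$ excludes at most four values in $\Zp^\times$, leaving plenty of choices for $p \ge 5$. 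Thus $1$ is a simple eigenvalue and $V_\frp/(\tau-1)V_\frp$ is one-dimensional. Irreducibility is immediate: the $\frp$-component of the joint image already contains $\SL_2(\cO_{F_f,\mathfrak{q}_f}) \times \SL_2(\cO_{F_g,\mathfrak{q}_g})$, whose tensor of standard representations is absolutely irreducible over $L_\frp$ (and over $k_\frp$).

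For $\Hyp(\QQ(\mu_{p^\infty}),T_\frp)$, further impose $x^2 \not\equiv \varepsilon_f(u) \pmod{\frp}$ and $x^2 \not\equiv 1/\varepsilon_g(u) \pmod{\frp}$; since $(\FF_p^\times)^2$ has $(p-1)/2 \ge 3$ elements when $p \ge 7$, these two residues can be avoided. Together with $\varepsilon_f\varepsilon_g(u) \not\equiv 1 \pmod p$, this guarantees the four eigenvalues of $\bar\tau$ stay distinct with $1$ still a simple eigenvalue mod $\frp$, so $(\tau-1)$ has rank $3$ on $T_\frp/\frp T_\frp$ and $T_\frp/(\tau-1)T_\frp$ is free of rank one over $\cO_{L,\frp}$; residual irreducibility is the same tensor-product argument. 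The main obstacle of the argument is the initial translation step: showing that the hypothesis $\chi_\gamma(u)=1$ on the decomposition group forces both $\alpha_f$ and $\varepsilon_f(u)$ into the small local field $F_{f,\mathfrak{q}_f}$ (and analogously for $g$), which is what allows the diagonal matrices with prescribed eigenvalues in $\Zp^\times$ to sit inside the integral Papier coset — once this is established, the remainder is a bookkeeping exercise on eigenvalues of tensor products.
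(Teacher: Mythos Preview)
Your proof is correct and follows essentially the same route as the paper's: use Papier's theorem and the decomposition-group hypothesis to land $\alpha_f,\alpha_g$ (and hence $\rho_{f,\frp}(\sigma),\rho_{g,\frp}(\sigma)$) inside $\GL_2$ of the small local fields, invoke the joint big image result to vary the two $\SL_2$-factors independently, and then pick $x\in\ZZ_p^\times$ (with $y=x^{-1}$) so that the diagonal tensor product has $1$ as a simple eigenvalue, using the count of quadratic residues mod $p$ for the integral refinement. Your write-up is slightly more explicit than the paper's (you spell out why $\varepsilon_f(u)$ also lies in $F_{f,\mathfrak{q}_f}$, and you record the irreducibility argument inside the proof rather than before it), but the strategy is identical.
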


    \begin{proof}
     The condition on the decomposition groups implies that for $\sigma \in G_{\QQ(\mu_{p^\infty})}$ whose image in $(\ZZ / N_f N_g \ZZ)^\times$ is $u$, the quantities $\alpha$ arising in Papier's theorem (Corollary \ref{papier}) for $f$ and $g$ lie in $F_{f, \frp}$ and $F_{g, \frp}$ respectively, so we have $\rho_{f, \frp}(\sigma) \in \GL_2(F_{f, \frp})$ and $\rho_{g, \frp}(\sigma) \in \GL_2(F_{g, \frp})$. Since
     \[ (\rho_{f, \frp} \times \rho_{g, \frp})\left(H \cap G_{\QQ(\mu_{p^\infty})}\right) = \SL_2(\cO_{F_f, \frp}) \times \SL_2(\cO_{F_g, \frp}),\]
     it follows that the image of $G_{\QQ(\mu_{p^\infty})}$ under $\rho_{f, \frp} \times \rho_{g, \frp}$ contains the element
     \[ \left( \tbt x 0 0 {x^{-1} \varepsilon_f(u)}, \tbt y 0 0 {y^{-1} \varepsilon_g(u)} \right) \]
     for any $x \in \cO_{F_f, \frp}^\times$ and $y \in \cO_{F_g, \frp}^\times$. Choosing $x, y \in \ZZ_p^\times$ with $xy = 1$ and $x^{-2}\varepsilon_f(u) \ne 1$, $x^2 \varepsilon_g(u) \ne 1$ we see that the image of this element under the tensor product map is diagonal and has exactly one entry equal to 1, so $\Hyp(\QQ(\mu_{p^\infty}), V_{\frp})$ holds.

     If $p \ge 7$, then we may choose $x$ such that $x^{-2} \varepsilon_f(u) \ne 1$, $x^2 \varepsilon_g(u) \ne 1$ modulo $p$ (as there are at least three distinct quadratic residues modulo $p$); and the condition $\varepsilon_f \varepsilon_g(u) \ne 1 \bmod p$ implies that the fourth diagonal entry is also not equal to 1 modulo $p$. So $\Hyp(\QQ(\mu_{p^\infty}), T_{\frp})$ holds.
    \end{proof}

    \begin{remark}
     In particular, the proposition applies if $\varepsilon_f \varepsilon_g \ne 1$ and $F_{f, \frp} = L_{f, \frp}$ and $F_{g, \frp} = L_{g, \frp}$, since in this case both decomposition groups are trivial and we may take any $u$ with $\varepsilon_f \varepsilon_g(u) \ne 1$. See \cite[Proposition 7.2.18]{LLZ14}, which is the special case where $L_{f, \frp} = L_{g, \frp} = \Qp$.
    \end{remark}

    \begin{proposition}
     \label{prop:existencetauII}
     Suppose there exists $u \in (\ZZ / N \ZZ)^\times$ such that $\varepsilon_g(u) = -1$, but $\chi_\gamma(u) = 1$ for all $\gamma \in \Gamma_f$. Then for \emph{all} good primes $\frp$, $\Hyp(\QQ(\mu_{p^\infty}), T_{\frp})$ holds.
    \end{proposition}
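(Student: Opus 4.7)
The plan is to produce a $\tau \in G_{\QQ(\mu_{p^\infty})}$ whose image under $\rho_{f, \frp} \otimes \rho_{g, \frp}$ has residual cokernel of dimension one on $T_\frp / \frp T_\frp$; by Nakayama this lifts to make $T_\frp / (\tau - 1) T_\frp$ free of rank one over $\cO_{L, \frp}$, while irreducibility of $T_\frp \otimes k_\frp$ follows routinely from the good-prime hypothesis, giving $\Hyp(\QQ(\mu_{p^\infty}), T_\frp)$.

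First I would fix $\sigma \in G_{\QQ(\mu_{p^\infty})}$ lifting $u$ and apply Corollary \ref{papier} to both $f$ and $g$. The hypothesis $\chi_\gamma(u) = 1$ for $\gamma \in \Gamma_f$ makes the $f$-image of the coset $\sigma (H \cap G_{\QQ(\mu_{p^\infty})})$ the full set of matrices in $\GL_2(\cO_{F_f} \otimes \Zp)$ with determinant $\varepsilon_f(u)$, while the $g$-image takes the form $\tbt{\alpha_g}{0}{0}{-\alpha_g^{-1}} \SL_2(\cO_{F_g} \otimes \Zp)$ for some $\alpha_g \in (\cO_{L_g} \otimes \Zp)^\times$ satisfying the twist condition. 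Since $\frp$ is good, the joint image of $H \cap G_{\QQ(\mu_{p^\infty})}$ is the full product of $\SL_2$'s, so the $f$- and $g$-components of the image of $\tau$ can be chosen independently within their cosets. The crucial gadget, available precisely because $\varepsilon_g(u) = -1$, is the involution $M_g := \tbt{0}{\alpha_g}{\alpha_g^{-1}}{0}$, which lies in the $g$-coset (it equals $\tbt{\alpha_g}{0}{0}{-\alpha_g^{-1}} \cdot \tbt{0}{1}{-1}{0}$) and satisfies $M_g^2 = I$ with eigenvalues $\pm 1$.

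With $M_g$ an involution, $V_g$ splits as $V_g^+ \oplus V_g^-$, and $M_f \otimes M_g - I$ restricts to $M_f - I$ on $V_f \otimes V_g^+$ and to $-(M_f + I)$ on $V_f \otimes V_g^-$; hence the residual cokernel has dimension $\dim \ker(\bar M_f - I) + \dim \ker(\bar M_f + I)$. One then chooses $M_f$ in its Papier coset to force this sum to equal one. If $\varepsilon_f(u) \not\equiv \pm 1 \pmod{\frp}$, take $M_f = \tbt{1}{0}{0}{\varepsilon_f(u)}$; if $\varepsilon_f(u) \equiv 1 \pmod{\frp}$, take $M_f = \tbt{1}{1}{0}{\varepsilon_f(u)}$, whose reduction is a nontrivial unipotent Jordan block with only $1$ as an eigenvalue.

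The main obstacle is the case $\varepsilon_f(u) \equiv -1 \pmod{\frp}$: any semisimple $\bar M_f$ of determinant $-1$ then has eigenvalues $\{1, -1\}$, giving cokernel of dimension $2$, while the only non-semisimple possibility has double eigenvalue $\pm i$, contributing $0$. I would resolve this by replacing $u$ with $u v$ for some $v$ in the intersection $\ker \varepsilon_g \cap \bigcap_{\gamma \in \Gamma_f} \ker \chi_\gamma$ chosen so that $\varepsilon_f(u v) \not\equiv \pm 1 \pmod{\frp}$, which puts us back in one of the earlier cases; the existence of such a $v$ is a character-theoretic matter and is automatic once $\varepsilon_f \varepsilon_g$ is globally nontrivial — the case in which $\Hyp$ can hold at all, by the negative result earlier in this section. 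Securing this existence uniformly in $\frp$ is where I expect the remaining technical work to lie.
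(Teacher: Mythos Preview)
Your approach is essentially the paper's: lift $u$ to $\sigma \in G_{\QQ(\mu_{p^\infty})}$, apply Papier's theorem to both $f$ and $g$, take $M_g = \stbt{0}{\alpha_g}{\alpha_g^{-1}}{0}$ in the $g$-coset, and pair it with a unipotent $M_f$ in the $f$-coset. The paper does exactly this, choosing $M_f = \stbt{1}{1}{0}{1}$ and noting that $\stbt{1}{1}{0}{1} \otimes \stbt{1}{0}{0}{-1}$ has rank-$1$ cokernel on $T_\frp$.

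The one point you have missed is that your entire case analysis on $\varepsilon_f(u) \pmod{\frp}$ is unnecessary: the hypothesis $\chi_\gamma(u) = 1$ for all $\gamma \in \Gamma_f$ already forces $\varepsilon_f(u) = 1$. Indeed, complex conjugation together with the character $\varepsilon_f^{-1}$ is always an inner twist of $f$ (since $\overline{f} = f \otimes \varepsilon_f^{-1}$), so $\varepsilon_f^{-1}$ appears among the $\chi_\gamma$. Hence the $f$-coset is exactly $\SL_2(\cO_{F_f,\frp})$, you are always in your ``$\varepsilon_f(u) \equiv 1$'' case, the choice $M_f = \stbt{1}{1}{0}{1}$ works for every good $\frp$, and no modification of $u$ is needed. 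With this observation your argument is complete and coincides with the paper's.
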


    \begin{proof}
     Since $p \nmid N_f N_g$, we may find $\sigma \in G_{\QQ(\mu_{p^\infty})}$ mapping to $u$. By Papier's theorem (Corollary \ref{papier} above), the image of the coset $\sigma \cdot (H \cap G_{\QQ(\mu_{p^\infty})})$ under $\rho_{f, \frp} \times \rho_{g, \frp}$ is the set
     \[\left \{(x, y) : x \in \SL_2(\cO_{F_f, \frp}), y \in \stbt \alpha 0 0 {-\alpha^{-1}}\SL_2(\cO_{F_g, \frp})\right\},\]
     where $\alpha \in \cO_{L_g, \frp}^\times$ is any element such that $\gamma(\alpha) = \chi_{\gamma}(\sigma) \alpha$ for all inner twists $(\gamma, \chi_\gamma)$ of $g$ such that $\gamma$ lies in the decomposition group of $\frp$.

     However, the coset $\stbt \alpha 0 0 {-\alpha^{-1}}\SL_2(\cO_{F_g, \frp})$ contains $\stbt \alpha 0 0 {-\alpha^{-1}} \stbt 0 1 {-1} 0 = \stbt 0 \alpha {\alpha^{-1}} 0$. Since $\alpha$ is only defined up to multiplication by $\cO_{F, \frp}^\times$, we may assume that $\alpha^2 \ne 1 \bmod \frp$ (using the assumption that $p \ge 5$). Then the element $\stbt 0 \alpha {\alpha^{-1}} 0$ is conjugate in $\GL_2(\cO_{L_g, \frp})$ to $\stbt 1 0 0 {-1}$.

     Hence the group $G_{\QQ(\mu_{p^\infty})}$ contains an element $\tau$ whose image in $\GL_2(\cO_{L_f, \frp}) \times \GL_2(\cO_{L_g, \frp})$ is conjugate to $\left( \stbt 1 1 0 1, \stbt 1 0 0 {-1} \right)$, and this acts on $T_\frp$ with cokernel free of rank 1 as desired.
    \end{proof}

    \begin{remark}
     Note in particular that the hypotheses of the preceding proposition are satisfied if $g$ has odd weight, and either $N_f$ and $N_g$ are coprime, or $f$ has trivial character and no nontrivial inner twists.
    \end{remark}


  \subsection{Special elements: the CM case}

   We now suppose that $f$, $g$ both have weights $\ge 2$, as before, and $f$ is non-CM, but $g$ is CM, associated to a Gr\"ossencharacter $\psi$ of an imginary quadratic field $K$. Let $\tilde L_g$ be the extension of $L_g$ in which the values of $\psi$ lie, and let us suppose that our embedding $L_g \into L$ extends to an embedding $\tilde L_g \into L$.

   We let $H$ be an open subgroup of $G_K$, with $G_K / H$ abelian, such that $H \subseteq H_f$ and $\hat\psi(H) \subseteq (\hat\cO_K)^{\times(1 - k)}$. In this CM setting, we say a prime $\frp$ of $L$ (above some rational prime $p$) is \emph{good} if $p \nmid N_f N_g$, $\frp$ is unramified in $F_f$ and in the quaternion algebra $B_f$, the image of $H$ under $\rho_{f, p}$ contains $G_f(\Zp)$, and the image of $G_K$ under $\hat\psi_p$ contains $(\cO_K \otimes \Zp)^{\times(1-k)}$. Since $H$ is open in $G_\QQ$, all but finitely many primes $p$ are good, as before.

   \begin{proposition}
    Suppose there exists $u \in (\ZZ / N \ZZ)^\times$ such that $\varepsilon_f \varepsilon_g(u) \ne 1$ and $\varepsilon_K(u) = 1$, where $\varepsilon_K$ is the quadratic Dirichlet character attached to $K$.

    Let $\frp$ be a good prime such that $L_{f, \frp} = F_{f, \frp}$ and $\tilde{L}_{g, \frp} = \Qp$. Then $\Hyp(\QQ(\mu_{p^\infty}), V_{\frp})$ holds; and if $p \ge 7$ and $\varepsilon_f \varepsilon_g(u) \ne 1 \bmod p$, then in fact $\Hyp(\QQ(\mu_{p^\infty}), T_{\frp})$ holds.
   \end{proposition}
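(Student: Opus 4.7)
My plan closely mirrors the proof of Proposition \ref{prop:existencetau}, with the induced/CM structure for $g$ replacing the Momose--Ribet/Papier picture on the $g$-side. First, I lift $u$ to some $\sigma \in G_{\QQ(\mu_{p^\infty})}$; the hypothesis $\varepsilon_K(u) = 1$ ensures $\sigma \in G_K$. Consequently, in a basis realising $\rho_{g,\frp} = \Ind_{G_K}^{G_\QQ} \hat\psi_\frp$, the matrix $\rho_{g,\frp}(\sigma)$ is diagonal with entries $\hat\psi_\frp(\sigma), \hat\psi^c_\frp(\sigma) \in \Zp^\times$ (using $\tilde L_{g,\frp} = \Qp$), whose product equals $\varepsilon_g(u)$ since the $p$-adic cyclotomic character vanishes on $\sigma$. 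On the $f$-side, because $\frp$ is good and $L_{f,\frp} = F_{f,\frp}$, Corollary \ref{papier} applies with no nontrivial condition on $\alpha$, so $\rho_{f,\frp}\bigl(\sigma \cdot (H \cap G_{\QQ(\mu_{p^\infty})})\bigr) = \stbt \alpha 0 0 {\varepsilon_f(u)\alpha^{-1}} \SL_2(\cO_{F_f,\frp})$ for $\alpha \in \cO_{F_f,\frp}^\times$ arbitrary.

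The second step is a joint-surjectivity argument: I would show that the image of $H \cap G_{\QQ(\mu_{p^\infty})}$ under $\rho_{f,\frp} \times \hat\psi_\frp$ is the full product of $\SL_2(\cO_{F_f,\frp})$ with some open subgroup $U \subseteq \Zp^\times$. Surjection onto each factor follows from the good-prime hypotheses combined with the observations that the determinant of $\rho_f$ on $H \cap G_{\QQ(\mu_{p^\infty})}$ is trivial (forcing the image into $\SL_2$), and that the projection of $(\cO_K \otimes \Zp)^{\times(1-k_g)}$ to the $\frp$-component is open in $\Zp^\times$. Joint surjectivity then follows from Goursat's lemma: the correspondence between Goursat quotients would have to land in an abelian quotient of $\SL_2(\cO_{F_f,\frp})$, which is trivial for $p \ge 5$ since this group equals its own closed commutator subgroup. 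Hence the joint image of $\sigma \cdot (H \cap G_{\QQ(\mu_{p^\infty})})$ under $\rho_{f,\frp} \times \rho_{g,\frp}$ contains all pairs $\left( \stbt x 0 0 {\varepsilon_f(u)x^{-1}}, \stbt y 0 0 {\varepsilon_g(u)y^{-1}} \right)$ with $x \in \cO_{F_f,\frp}^\times$ and $y$ in an open subgroup of $\Zp^\times$.

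Finally, I would carry out the same tensor-product computation as in Proposition \ref{prop:existencetau}: specialising $x \in \Zp^\times$ and $y = x^{-1}$, the tensor is diagonal with entries $(1, \varepsilon_g(u) x^2, \varepsilon_f(u) x^{-2}, \varepsilon_f \varepsilon_g(u))$. Since $\varepsilon_f \varepsilon_g(u) \ne 1$ by hypothesis, choosing $x \in \Zp^\times$ outside the finite set of solutions to $x^2 = \varepsilon_g(u)^{-1}$ or $x^2 = \varepsilon_f(u)$ produces an element $\tau$ acting on $V_\frp$ with one-dimensional cokernel, establishing $\Hyp(\QQ(\mu_{p^\infty}), V_\frp)$; the irreducibility clause follows from the largeness of $(\rho_{f,\frp} \times \rho_{g,\frp})(H)$, exactly as tacitly used in the non-CM case. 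For $p \ge 7$ and $\varepsilon_f \varepsilon_g(u) \not\equiv 1 \pmod p$, the availability of at least three distinct nontrivial quadratic residues modulo $p$ lets me enforce the two inequalities modulo $\frp$ as well, yielding the integral version $\Hyp(\QQ(\mu_{p^\infty}), T_\frp)$. The principal obstacle is the joint-surjectivity step in the middle paragraph, since a CM analogue of the joint big-image results of Section \ref{sect:jointlargeimage} has not been explicitly proved; however, the Goursat argument combined with perfection of $\SL_2$ modulo its centre is enough in the setting at hand.
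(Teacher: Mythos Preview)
Your proposal is correct and follows essentially the same route as the paper's proof. The paper is terser but structurally identical: it notes that $\SL_2(\cO_{F,\frp})$ and $\Zp^\times$ have no common nontrivial quotient (exactly your Goursat-plus-perfectness step), deduces that the image of $H \cap G_{\QQ(\mu_{p^\infty})}$ under $\rho_{f,\frp}\times\rho_{g,\frp}$ is the full product $\SL_2(\cO_{F_f,\frp}) \times \bigl\{\stbt{y}{0}{0}{y^{-1}}:y\in\Zp^\times\bigr\}$, translates by $\sigma$ to get the same family of diagonal pairs you write down, and then concludes exactly as in Proposition~\ref{prop:existencetau}.

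One small point worth tightening: you only claim the $g$-side image is an \emph{open} subgroup $U\subseteq\Zp^\times$, whereas the paper asserts the full $\Zp^\times$. For the rational hypothesis $\Hyp(\QQ(\mu_{p^\infty}),V_\frp)$ your weaker statement suffices, but for the integral hypothesis $\Hyp(\QQ(\mu_{p^\infty}),T_\frp)$ you need $y$ to range over enough residues modulo $p$ for the quadratic-residue count to bite; an arbitrary open subgroup could reduce to $\{1\}$ mod $p$. The full surjectivity onto $\Zp^\times$ is available from the good-prime hypothesis (the image of $G_K$ under $\hat\psi_p$ contains $(\cO_K\otimes\Zp)^{\times(1-k_g)}$, and the same Goursat argument against $\SL_2$ upgrades this to the joint image of $H\cap G_{\QQ(\mu_{p^\infty})}$), so this is a cosmetic fix rather than a gap in the strategy.
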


   \begin{proof}
    This is similar to Proposition \ref{prop:existencetau}. Since $\SL_2(\cO_{F, \frp})$ and $\ZZ_p^\times$ have no common quotient, the image of $H \cap G_{\QQ(\mu_{p^\infty})}$ under $\rho_{f, \frp} \times \rho_{g, \frp}$ is the whole of the group
    \[ \SL_2(\cO_{F, \frp}) \times \left\{ \tbt y 0 0 {y^{-1}}: y \in \ZZ_p^\times\right\}.\]
    If we choose $\sigma \in G_{\QQ(\mu_{p^\infty})}$ lifting $u$, then $\rho_{f, \frp}(\sigma) \in \GL_2(\cO_{F, \frp})$, and $\rho_{g, \frp}(\sigma)$ is diagonal; thus the image of the coset $\sigma \cdot(H \cap G_{\QQ(\mu_{p^\infty})})$ contains all elements of the form
    \[ \left( \tbt x 0 0 {x^{-1} \varepsilon_f(u)}, \tbt y 0 0 {y^{-1} \varepsilon_g(u)} \right) \]
    with $x \in \cO_{F_f, \frp}^\times$ and $y \in \ZZ_p^\times$. The proof now proceeds as before.
   \end{proof}


  \subsection{Special elements: the weight one case}
   \label{sect:wt1}

   We now assume $g$ is a weight 1 form, so the Galois representation $\rho_g$ lands in $\GL_2(L_g) \subset \GL_2(L_g \otimes \hat\QQ)$, and has finite image (i.e. it is an Artin representation). In this section we \emph{do} permit $g$ to be of CM type. As in the previous section, we assume that our other newform $f$ has weight $\ge 2$ and is not of CM type.

   \begin{theorem}
    Suppose $N_f$ is coprime to $N_g$. Then for all primes $\frp$ of $L$ such that $p \nmid N_g$ and $p$ is unramified in $F_f$ and $B_f$, we may find $\tau \in G_{\QQ(\mu_{p^\infty})}$ such that $V_\frp / (\tau - 1) V_\frp$ is 1-dimensional over $L_{\frp}$.

    For all but finitely many $\frp$, we may choose $\tau$ such that $T_\frp/(\tau-1)T_\frp$ is free of rank 1 over $\cO_{L, \frp}$.
   \end{theorem}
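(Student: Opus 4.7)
The plan is to merge the ingredients of Propositions \ref{prop:existencetau} and \ref{prop:existencetauII}, adapted to the weight-one setting. The crucial observation is that $g$ having weight $1$ forces $\varepsilon_g$ to be odd, so for any complex conjugation $c \in G_\QQ$ the matrix $\rho_g(c)$ is an involution with $\det = \varepsilon_g(-1) = -1$, hence has eigenvalues $\{1,-1\}$; this element will play the role played by the $g$-side element $\stbt 0 \alpha {\alpha^{-1}} 0$ in the proof of Proposition \ref{prop:existencetauII}.

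I first establish a linear disjointness statement. The splitting field $K_g$ of $\rho_g$ is unramified outside $N_g$, while the compositum $\QQ(\mu_{p^\infty}) \cdot \QQ(\mu_{N_f}) \cdot \QQ(\varepsilon_f)$ is ramified only at $p$ and at primes dividing $N_f$. Since $p \nmid N_g$ and $(N_f, N_g) = 1$, any common subfield is a subextension of an abelian extension of $\QQ$ unramified at every finite prime, hence equal to $\QQ$ by Minkowski's theorem. Consequently $H_f \cap G_{\QQ(\mu_{p^\infty})}$ surjects onto $\Gal(K_g/\QQ) = \rho_g(G_\QQ)$, so I may choose $\sigma_0 \in H_f \cap G_{\QQ(\mu_{p^\infty})}$ with $\rho_g(\sigma_0) = \rho_g(c)$. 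Since $H_f \subseteq \ker \varepsilon_f$ and the cyclotomic character is trivial on $G_{\QQ(\mu_{p^\infty})}$, one has $\det \rho_{f,\frp}(\sigma_0) = 1$.

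For all but finitely many $\frp$, Papier's theorem (Corollary \ref{papier}) combined with the no-common-quotient argument already used to prove Theorem \ref{thm:goodprimes} shows that $\rho_{f,\frp}$ maps the coset $\sigma_0 \cdot (H_f \cap G_{K_g} \cap G_{\QQ(\mu_{p^\infty})})$ onto all of $\SL_2(\cO_{F_f} \otimes \Zp)$. Choose $\tau$ in this coset with $\rho_{f,\frp}(\tau) = \stbt 1 1 0 1$. Since $\rho_{g,\frp}(\tau) = \rho_g(c)$ is conjugate in $\GL_2(\cO_{L_g,\frp})$ to $\stbt 1 0 0 {-1}$, the image of $\tau$ under $\rho_{f,\frp} \otimes \rho_{g,\frp}$ is conjugate to $\bigl(\stbt 1 1 0 1,\, \stbt 1 0 0 {-1}\bigr)$, so the cokernel calculation at the end of the proof of Proposition \ref{prop:existencetauII} applies verbatim: $\tau - 1$ acts on $T_\frp$ with cokernel free of rank $1$. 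For the remaining finitely many $\frp$ satisfying only the weaker hypotheses of the first assertion, one replaces Papier's equality by Momose's open-image theorem; the main technical obstacle is then a $p$-adic density step showing that a coset $D \cdot U$ of any open subgroup $U \subseteq \SL_2(\cO_{F_f} \otimes \Zp)$ still meets the codimension-one unipotent locus $\{\operatorname{tr} = 2\}$ away from the identity, which is sufficient to produce a $\tau$ giving the required $V_\frp$-level cokernel.
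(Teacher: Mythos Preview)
Your core idea matches the paper's: exploit that $\rho_g$ is odd to get an element conjugate to $\stbt{1}{0}{0}{-1}$ on the $g$-side, and pair it with a nontrivial unipotent on the $f$-side. For the second assertion (all but finitely many $\frp$) your argument is essentially correct, though the ``no-common-quotient'' step you attribute to Theorem~\ref{thm:goodprimes} is not actually there; the relevant input is rather that for $p$ large any proper normal subgroup of $\SL_2(\cO_{F_f}\otimes\Zp)$ has quotient of order at least $|\PSL_2(\FF_p)|$, exceeding $|\Gal(K_g/\QQ)|$.

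However, there is a genuine gap in your treatment of the first assertion. Your linear disjointness is too weak: you only show $K_g$ is disjoint from the \emph{abelian} extension cut out by $H_f \cap G_{\QQ(\mu_{p^\infty})}$, not from the full splitting field of $\rho_{f,\frp}$. This forces you into the ``density step'' claim that every coset $D\cdot U$ of an open subgroup $U\subseteq\SL_2(\cO_{F_f}\otimes\Zp)$ meets $\{\operatorname{tr}=2\}$. That claim is false: take $D=\stbt{a}{0}{0}{a^{-1}}$ with $a+a^{-1}\not\equiv 2\pmod p$ and $U$ the principal congruence subgroup of level $p$; then $\operatorname{tr}(Du)\equiv a+a^{-1}\pmod p$ for all $u\in U$, so the coset misses the unipotent locus entirely.

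The fix is immediate and is exactly what the paper does: the \emph{same} ramification argument you already give (primes dividing $N_g$ versus primes dividing $pN_f$) shows that $K_g$ is linearly disjoint from the full splitting field of $\rho_{f,\frp}$ together with $\QQ(\mu_{p^\infty})$. Hence $G_{\QQ(\mu_{p^\infty})}$ surjects onto $\rho_g(G_\QQ)\times\rho_{f,\frp}(G_{\QQ(\mu_{p^\infty})})$, and one may choose the $g$-side and $f$-side images of $\tau$ independently. No coset analysis is needed, and there is no reason to pass through $H_f$ at all.
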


   \begin{proof}
    Let $p$ be the rational prime below $\frp$. As $\rho$ is unramified outside $N_g$ and $\rho_{f, \frp}$ is unramified outside $p N_f$, and $(p N_f, N_g) = 1$, we conclude that the splitting field of $\rho_g$ is linearly disjoint from that of $\rho_{f, \frp}$ and from $\QQ(\mu_{p^\infty})$. Hence, given any $a \in \rho_{g}(G_{\QQ})$ and $b \in \rho_{f, \frp}\left(G_{\QQ(\mu_{p^\infty})}\right)$, we may find $\tau \in G_{\QQ(\mu_{p^\infty})}$ such that $\rho_g(\tau) = a$ and $\rho_{f, \frp}(\tau) = b$.

    We know that $\rho_g$ is odd, so $\rho(G_{\QQ})$ contains an element $a$ conjugate to $\stbt{-1}001$.

    Meanwhile, since $f$ is not of CM type, $\rho_{f, \frp}\left(G_{\QQ(\mu_{p^\infty})}\right)$ contains a conjugate of an open subgroup of $\SL_2(F_{f, \frp})$, where $F_{f, \frp}$ is the fixed field of the extra twists of $f$ as in the previous section. In particular, it contains a conjugate of an open subgroup of $\SL_2(\Zp)$; so, after a suitable conjugation, the image contains the element $b = \stbt 1 {p^r} 0 1$ for $r \gg 0$. The preceding argument allows us to find $\tau \in G_{\QQ(\mu_{p^\infty})}$ such that $\rho_g(\tau) = a$ and $\rho_{f, \frp}(\tau) = b$. As $a \otimes b-1$ clearly has 1-dimensional kernel, we are done.

    For all but finitely many $\frp$ we have the stronger result that $\rho_{f,\frp}\left(G_{\QQ(\mu_{p^\infty})}\right)$ contains a conjugate of $\SL_2(\cO_{F, \frp})$, so we may take $r = 0$ and we deduce that $a \otimes b-1$ has 1-dimensional kernel modulo $p$.
   \end{proof}

    \begin{remark}
     Further strengthenings of the results of this section may be possible: it seems reasonable to expect that whenever $\varepsilon_f \varepsilon_g$ is nontrivial, $\Hyp(\QQ(\mu_{p^\infty}), T_{\frp})$ should hold for all but finitely many $\frp$. But I have not been able to prove this.
    \end{remark}

\providecommand{\bysame}{\leavevmode\hbox to3em{\hrulefill}\thinspace}
\providecommand{\MR}[1]{}
\renewcommand{\MR}[1]{%
 MR \href{http://www.ams.org/mathscinet-getitem?mr=#1}{#1}.
}
\newcommand{\articlehref}[2]{#2 (\href{#1}{link})}

 \end{document}